\providecommand\@dotsep{5}
\renewcommand{\k}{\mathbf{k}}
\newcommand{\calC}{\mathcal C}
\newcommand{\from}{\colon}
\newcommand{\heart}{\heartsuit}
\DeclareMathOperator{\Aut}{Aut}
\DeclareMathOperator{\Hom}{Hom}
\DeclareMathOperator{\Stab}{Stab}
\DeclareMathOperator{\coker}{coker}
\renewcommand{\top}{\operatorname{top}}
\renewcommand{\bot}{\operatorname{bot}}
\declaretheorem[parent=section]{theorem}
\declaretheorem[sibling=theorem]{lemma}
\declaretheorem[sibling=theorem]{corollary}
\declaretheorem[sibling=theorem]{proposition}
\declaretheorem[sibling=theorem, style=remark]{remark}
 \DeclareMathOperator{\HHom}{Hom}
\title{Spherical objects and stability conditions on 2-Calabi--Yau quiver categories}
\author{Asilata Bapat \and Anand Deopurkar \and Anthony M. Licata}
\begin{document}

\begin{abstract}
  Consider a 2-Calabi--Yau triangulated category with a Bridgeland stability condition.
  We devise an effective procedure to reduce the phase spread of an object by applying spherical twists.
  Using this, we give new proofs of the following theorems for 2-Calabi--Yau categories associated to ADE quivers:
  \begin{inparaenum}
  \item all spherical objects lie in a single orbit of the braid group, and
  \item the space of Bridgeland stability conditions is connected.
  \end{inparaenum}  
\end{abstract}

\maketitle

\section{Introduction}
Suppose we have a category $\mathcal C$ with a measure of complexity of its objects.
To what extent can we simplify an object by applying suitable auto-equivalences of $\mathcal C$?
In this paper, we explore this question for $2$-Calabi--Yau triangulated categories with respect to a measure of complexity provided by a Bridgeland stability condition. 
We focus on the question of simplifying spherical objects by applying a sequence of auto-equivalences known as spherical twists.

Spherical objects play an important role in Calabi--Yau categories.
Recall that an object of an $n$-Calabi--Yau category is spherical if its endomorphism ring is isomorphic to the cohomology ring of the $n$-sphere---this is the simplest possible endomorphism ring, given the $n$-Calabi--Yau condition.
Every spherical object gives an auto-equivalence of the category, called a spherical twist~\cite{sei.tho:01}.

Spherical objects interact wonderfully with Bridgeland stability conditions.
For example, the Harder--Narasimhan factors of any spherical object with respect to a Bridgeland stability condition are themselves spherical (see, e.g.~\cite[Corollary 2.3]{huy:12}).
In some cases (for example, for derived categories of K3 surfaces), a stability condition is determined by its behaviour on spherical objects (see \cite{huy:12}).

Our first set of results (\autoref{sec:impromevent}) discusses how to simplify arbitrary objects of a 2-Calabi--Yau triangulated category $\mathcal C$ by applying twists by spherical objects that are stable with respect to a given stability condition.
We show that under suitable hypotheses, we can strictly decrease the phase spread of an object by applying appropriate positive and negative twists.
If the set of possible spreads is a discrete subset of $\mathbb R$, then this procedure must terminate.
As a result, we get a particularly simple representative of the orbit of the object under the spherical twist group.

The phase improvement results raise the question of classifying the stable spherical objects with respect to a given stability condition.
Our next set of results (\autoref{sec:sphericalstable}) achieves this for the 2-Calabi--Yau category $\mathcal C_\Gamma$ associated to a quiver $\Gamma$.
We briefly describe the classification.
The Grothendieck group $K\left(\mathcal C_\Gamma\right)$, together with the $\hom$ pairing is naturally isomorphic to the root lattice of $\Gamma$.
The spherical objects of $C_\Gamma$ map to real roots of $K\left(C_\Gamma\right)$.
The action of the Coxeter group $W_\Gamma$ on the root lattice lifts to an action of the Artin--Tits braid group $B_\Gamma$ on $\mathcal C_\Gamma$.
The standard generators of $B_\Gamma$ act on $\mathcal C_\Gamma$ by spherical twists.

Let $\tau$ be a generic stability condition on $\mathcal C_\Gamma$ whose heart is the standard heart.
Given a root $w \in K\left( C_\Gamma \right)$, there are multiple (in fact, infinitely many, even modulo triangulated shifts) spherical objects with class $w$.
However, only one of them (up to triangulated shift) is $\tau$-stable.
We give an explicit combinatorial algorithm to construct this object.
More precisely, suppose we have a minimal expression for $w$ in terms of simple reflections applied to a simple root $v$, say
\[ w = s_{v_n} \cdots s_{v_1}v.\]
Let $P_v$ be the simple object of the standard heart of class $v$.
We prove that the unique $\tau$-stable object $P_w$ of class $w$ can be expressed as a lift of the above expression:
\[ P_w = \sigma_{v_n}^{\pm 1} \cdots \sigma_{v_1}^{\pm 1} P_v,\]
and moreover we explicitly identify the exponents.
The choice of signs in the exponents is governed by the interaction of the root sequence for the expression for $w$ with the central charge.

In the last section (\autoref{sec:applications}), we give two applications of the simplification procedure and of the classification of spherical stable objects mentioned above.
The first application is a new proof of the following theorem.
\begin{theorem}\label{thm:spherical}
  Let $\Gamma$ be a quiver of type $A_n$, $D_n$, or $E_6$, $E_7$, $E_8$.
  The spherical objects of $\mathcal C_\Gamma$ lie in the $B_\Gamma$ orbit of the simple objects of the standard heart.
\end{theorem}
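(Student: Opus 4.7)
The plan is to combine the phase spread reduction procedure of \autoref{sec:impromevent} with the explicit description of $\tau$-stable spherical objects obtained in \autoref{sec:sphericalstable}. I would fix a generic stability condition $\tau$ on $\mathcal{C}_\Gamma$ whose heart is the standard heart, so that each simple $P_v$ is $\tau$-stable. The ADE hypothesis enters here: because $K(\mathcal{C}_\Gamma)$ is a finite-rank root lattice with finite Weyl group, the set of phase spreads that can occur among spherical objects should be a discrete subset of $\mathbb{R}_{\geq 0}$.

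Given a spherical object $S$, I would iteratively apply the phase improvement procedure to obtain a sequence $S = S_0, S_1, \ldots$, where each $S_{i+1} = \sigma_{T_i}^{\pm 1} S_i$ for some $\tau$-stable spherical $T_i$, and the spread of $S_{i+1}$ is strictly smaller than that of $S_i$. Discreteness then forces termination at some $S_N$ of spread zero, i.e.\ a $\tau$-semistable spherical object. In a $2$-Calabi--Yau category this is automatically $\tau$-stable: a strictly semistable spherical object would decompose into mutually orthogonal spherical Jordan--H\"older factors, which is incompatible with $\Hom(S_N, S_N) = \k$. Now the classification of \autoref{sec:sphericalstable} expresses every $\tau$-stable spherical object as an explicit braid word applied to a simple $P_v$; hence $S_N$ lies in the $B_\Gamma$-orbit of a simple. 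Each twist $\sigma_{T_i}^{\pm 1}$ applied during the reduction is itself an element of $B_\Gamma$, since the $T_i$ are $\tau$-stable spherical and so again braid-group-images of simples by the classification. Reversing the chain of twists then places the original $S$ in the same $B_\Gamma$-orbit.

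The main technical obstacle I anticipate is justifying termination, i.e.\ the discreteness claim for the spreads that actually appear along the procedure. For a single fixed class the stable phase is determined by the central charge and discreteness is immediate, but the class of $S_i$ changes at each step, so one needs to argue that the sequence of classes stays within a bounded region of the root lattice. For ADE this should follow by monitoring the mass of $S_i$, which should be monotone non-increasing under spread-reducing twists and bounded below by the minimal mass of a real root. A secondary concern is that the hypothesis of the phase improvement step---existence of a suitably positioned $\tau$-stable spherical object at each iteration---persists throughout, but this should be guaranteed by the existence half of the classification in \autoref{sec:sphericalstable}, which produces a $\tau$-stable representative for every real root.
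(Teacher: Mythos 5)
Your proposal follows essentially the same route as the paper's proof of \autoref{cor:spherical}: fix a generic standard stability condition, reduce the phase spread of an arbitrary spherical object by twisting in the stable spherical objects at its extremal phases, use discreteness of the possible spreads to terminate at a semistable object, and then feed the classification of \autoref{prop:unique-spherical-stable-construction} back in both to identify the terminal object as a braid image of a simple and to see that each twist used along the way is conjugate, by a braid element, to a twist in a simple. Two of your justifications need repair, though neither is fatal. First, your argument that a $\tau$-semistable spherical object is automatically stable is wrong as stated: Jordan--H\"older factors of the same phase need not be mutually orthogonal (non-isomorphic stable objects of equal phase have no degree-zero maps between them but can have nontrivial extensions), so a strictly semistable object with $\Hom^0(S,S)=\k$ is perfectly possible --- e.g.\ $\sigma_1 P_2$ in type $A_2$ when $P_1$ and $P_2$ share a phase. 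What saves you is genericity, not the $2$-Calabi--Yau condition: this is exactly \autoref{prop:unique-spherical-stable}, where a simple subobject is spherical, hence a real root of the same phase, hence equal in class to the whole object. Second, your anticipated obstacle about bounding the classes $[S_i]$ (and the proposed mass-monotonicity fix) is a non-issue in finite type: every spherical object has class a real root, of which there are only finitely many, and the HN factors of a spherical object are spherical, so all top and bottom phases lie in a set $\Phi$ that is a union of integer translates of a finite set; the spreads therefore lie in the discrete set $\{a-b \mid a,b\in\Phi\}$ with no further argument needed. This is precisely where the ADE hypothesis is used in the paper, and it is the only place.
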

In the main text, \autoref{thm:spherical} is \autoref{cor:spherical}.
A proof of this theorem in type $A$ appears in \cite{ish.ueh:05,ish.ued.ueh:10} and may also follow for all $ADE$ types from the ideas of \cite{ada.miz.yan:19}.

The second application is a new proof of the following theorem.
\begin{theorem}\label{thm:stability}
  Let $\Gamma$ be a quiver of type $A_n$, $D_n$, or $E_6$, $E_7$, $E_8$.
  Any stability condition $\tau \in \Stab(\mathcal C_\Gamma)$ is in the $B_\Gamma$ orbit of a standard stability condition.
  Furthermore, $\Stab(\mathcal C_\Gamma)$ is connected.
\end{theorem}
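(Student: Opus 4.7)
The plan is to combine the phase-reduction procedure of \autoref{sec:impromevent} with the classification of $\tau$-stable spherical objects from \autoref{sec:sphericalstable}. Let $U_0 \subset \Stab(\mathcal C_\Gamma)$ denote the subset of stability conditions whose heart is the standard heart. Evaluating central charges on the simples $P_v$ identifies $U_0$ with an open subset of $\bbH^n$, so $U_0$ is connected. The first assertion of the theorem will follow from the claim that for every $\tau \in \Stab(\mathcal C_\Gamma)$ there exists $\beta \in B_\Gamma$ with $\beta \cdot \tau \in U_0$. Connectedness of $\Stab(\mathcal C_\Gamma)$ will then follow because $U_0$ and each adjacent chamber $\sigma_v \cdot U_0$ share a boundary wall of codimension one (corresponding to a simple tilt at $v$); by induction on the length of $\beta$, every braid translate $\beta \cdot U_0$ lies in the same connected component as $U_0$, and these translates exhaust $\Stab(\mathcal C_\Gamma)$ by the claim.

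To prove the braid-equivalence claim, fix $\tau \in \Stab(\mathcal C_\Gamma)$ and consider the object $E := \bigoplus_v P_v$, the direct sum over the vertices of $\Gamma$ of the standard-heart simples. I would run the phase-reduction procedure of \autoref{sec:impromevent} on $E$ with respect to $\tau$: at each stage it furnishes a $\tau$-stable spherical object $S$ and an exponent $\epsilon \in \{\pm 1\}$ such that $\sigma_S^\epsilon(E)$ has strictly smaller phase spread. By the classification of $\tau$-stable spherical objects in the ADE case, $S$ lies in the $B_\Gamma$-orbit of some simple $P_v$, so $\sigma_S^\epsilon \in B_\Gamma$. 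Composing successive improvements produces a braid word $\beta$ taking $E$ to an object of progressively smaller phase spread.

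The key step is termination. Because $\Gamma$ is of ADE type, the set of classes of spherical objects is the finite real root system of $\Gamma$, hence a discrete subset of $K(\mathcal C_\Gamma)$. Consequently the masses $|Z_\tau(S)|$ and phases $\phi_\tau(S)$ attained along the procedure lie in a discrete subset of $\bbR_{>0} \times \bbR$, so the strictly decreasing sequence of phase spreads cannot continue indefinitely. Termination yields $\beta \in B_\Gamma$ such that $\beta(E)$ has phase spread less than $1$ with respect to $\tau$; after a uniform shift, all $\beta(P_v)$ become $\tau$-stable with phases lying in a half-open interval of length $1$, and hence all lie in a common heart $\calA$ of $\tau$. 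Because the $P_v$ generate $\mathcal C_\Gamma$ as a triangulated category, the $\beta(P_v)$ must be the simples of $\calA$, so $\calA$ is the $\beta$-image of the standard heart. Equivalently, $\beta^{-1}\cdot \tau$ has the standard heart and lies in $U_0$.

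The main obstacle is the simultaneous control of termination and of staying inside $B_\Gamma$: one must ensure that at every step the algorithm picks a stable spherical twist realised by a word in the standard generators, rather than by some exotic spherical twist outside the Artin--Tits subgroup. This is precisely where the ADE classification of stable spherical objects does the heavy lifting. A secondary subtlety concerns non-generic $\tau$ for which HN factors of $E$ may share phases or the stable spherical object at a given phase is not unique; I expect to handle this by a small perturbation of $\tau$ into the generic locus followed by continuity, using that the braid conclusion is a closed condition on $\tau$.
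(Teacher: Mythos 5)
Your central mechanism is the same as the paper's: apply the phase-reduction procedure to $E=\bigoplus_v P_v$, use the classification of $\tau$-stable spherical objects to see that each improving twist lies in the image of $B_\Gamma$, use finiteness of the root system to make the set of attainable phases discrete so that the strictly decreasing spread forces termination, and then identify the resulting heart of width $<1$ with a braid-and-rotation translate of the standard heart. This is precisely \autoref{prop:onestab} applied to the direct sum of the simples, exactly as in the paper's proof of \autoref{cor:stability}. (Minor: you should record that $\Hom^{i}(E,E)=0$ for $i<0$, which is needed to invoke \autoref{prop:improvement2}; it holds because the $P_v$ are simples of a heart.)

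The gaps are in your endgame. First, the improvement propositions \autoref{prop:improvement1}--\autoref{prop:improvement3} require the twisting object to be the \emph{unique} stable object of its phase, so your main argument only covers generic $\tau$. Your fix for non-generic $\tau$ --- perturb and invoke that ``the braid conclusion is a closed condition on $\tau$'' --- is not justified: the locus in question is a countable union of braid-and-rotation translates of $\overline{U_0}$, and a countable union of closed sets is closed only given a local finiteness statement that you have not supplied. The paper sidesteps this by noting that the perturbed $\tau$ lies in the same connected component as the original, so every component meets the standard component $\Stab_0$, and then quoting \cite{ike:14} for the fact that every point of $\Stab_0$ is, up to rotation, a braid translate of a standard condition. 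Second, your connectedness argument rests on the assertion that $U_0$ and $\sigma_v U_0$ share a codimension-one wall. That is true, but it is exactly the simple-tilt/wall-crossing statement that the paper outsources to \cite{ike:14}; nothing in the paper's Sections 3--4 proves it, and you do not either. Either prove this tilting lemma or cite it --- as written, your chamber-adjacency induction replaces the paper's citation with an unproved claim of comparable depth.
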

In the main text, \autoref{thm:stability} is \autoref{cor:stability}.
\autoref{thm:stability} has been proved in~\cite{ish.ueh:05,ish.ued.ueh:10} for type $A$, and in~\cite{ada.miz.yan:19} for types $A$, $D$, and $E$.

We highlight that the classification of spherical stable objects and the phase reduction procedure holds for any quiver $\Gamma$.
We require the finite type assumption to guarantee the termination of the procedure.
It is likely that termination works more generally (we have not found any counter-examples).
We hope to address this question in the future.

\subsection*{Acknowledgements}
We are grateful to Fabian Haiden, Ailsa Keating, Alexander Polishchuk, and Catharina Stroppel for discussions that prompted us to write this paper.
We are indebted to Tom Bridgeland for his encouragement and for discussions and suggestions about phase improvement.

\section{Background on stability conditions and spherical objects}\label{sec:background}
Let $\mathcal C$ be an arbitrary 2-Calabi--Yau triangulated category.
We begin by recalling the notion of spherical objects, spherical twists, and bottom/top phases with respect to a stability condition.
Recall that an object $X$ of $\calC$ is spherical if we have an isomorphism of $\k$-algebras
\[ \Hom^*(X,X) \cong H^*(S^2, \k).\]
In what follows, we write $\Hom(X,Y)$ for the graded vector space $\Hom^*(X,Y)$, namely the direct sum $\bigoplus_n \Hom^0(X, Y[n])$.
We have a pairing on the Grothendieck group $K(\mathcal C)$, defined as
\[
  \langle X,Y \rangle = \sum_{i=-\infty}^\infty \dim \Hom^i(X,Y).
\]
We call this the \emph{$\Hom$ pairing}.

A spherical object $X$ defines a triangulated auto-equivalence of $\calC$, called the spherical twist in $X$, denoted by $\sigma_X$.
An object $Y$ and its spherical twist $\sigma_X(Y)$ are related by the exact triangle
\begin{equation}\label{eqn:positive-twist}
  \HHom(X,Y) \otimes X \to Y \to \sigma_X(Y) \xrightarrow{+1},
\end{equation}
where the map $\HHom(X,Y) \otimes X \to Y$ is the evaluation map.
Likewise, $Y$ and the inverse twist $\sigma^{-1}_X(Y)$ are related by the exact triangle
\begin{equation}\label{eqn:negative-twist}
  \sigma^{-1}_X(Y) \to Y \to  X \otimes \HHom(Y,X)^\vee \xrightarrow{+1},
\end{equation}
where the map $Y \to X \otimes \HHom(Y,X)^\vee$ is obtained from the evaluation adjoint to the evaluation map.

Let $\tau$ be a stability condition on $\calC$.
Denote by $\phi_\tau(X)$ or simply $\phi(X)$ the phase of a semi-stable object $X$.
An arbitrary object $X$ has a unique Harder--Narasimhan (HN) filtration
\[
  0 = X_0 \to X_1 \to \dots \to X_n = X,
\]
where the sub-quotients $Z_i$, defined by triangles $X_{i-1} \to X_i \to Z_i \xrightarrow{+1}$, are $\tau$-semistable and satisfy
\[ \phi(Z_1) > \cdots > \phi(Z_n).\]
With the notation above, set
\begin{align*}
  \phi^+(X) = \phi(Z_1), &\quad \phi^-(X) = \phi(Z_n),\\
  X^{\top} = Z_1, &\text{ and }  X^{\bot} = Z_n.
\end{align*}
We call $\phi^+(X)$ (resp. $\phi^-(X)$) the \emph{top} (resp. \emph{bottom}) phase of $X$.

Recall that the definition of a stability condition implies that $\Hom(X, Y) = 0$ if $X$ an $Y$ are semistable with $\phi(X) > \phi(Y)$.
More generally, an easy induction shows that $\Hom(X, Y) = 0$ when $\phi^-(X) > \phi^+(Y)$.

\section{Phase impromevent using spherical twists}\label{sec:impromevent}
The goal of this section is to prove that by applying suitable spherical twists, we can predictably increase/decrease the bottom/top phase of an object.
Throughout, fix an arbitrary $\k$-linear 2-Calabi--Yau triangulated category $\calC$ and a stability condition $\tau$ on $\calC$.

The following is standard.
\begin{lemma}[Sandwich lemma]
  \label{prop:sandwich}
  Let $X \to Y \to Z \xrightarrow{+1}$ be an exact triangle.
  Then
  \begin{align*}
    \phi^-(Y) &\geq \min \{\phi^-(X), \phi^-(Z)\}, \text{ and }\\
    \phi^+(Y) &\leq \max \{\phi^+(X), \phi^+(Z)\}.
  \end{align*}
\end{lemma}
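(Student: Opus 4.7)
The plan is to prove each of the two inequalities by contradiction, using the vanishing property $\Hom(A,B)=0$ when $\phi^-(A) > \phi^+(B)$, which is noted at the end of the previous section. The two inequalities are dual, so I will carry out the argument for the bottom phase and sketch the analogous top phase version.

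For the bottom phase, set $\phi_0 = \phi^-(Y)$ and suppose for contradiction that $\phi_0 < \min\{\phi^-(X), \phi^-(Z)\}$. Let $q \from Y \to Y^\bot$ be the canonical projection to the bottom HN factor, which is semistable of phase $\phi_0$. First I would look at the composition
\[
X \to Y \xrightarrow{q} Y^\bot.
\]
Since $\phi^-(X) > \phi_0 = \phi^+(Y^\bot)$, the group $\Hom(X, Y^\bot)$ vanishes, so this composition is zero. Rotating the triangle $X \to Y \to Z \xrightarrow{+1}$, this means $q$ factors as $Y \to Z \to Y^\bot$. But now $\phi^-(Z) > \phi_0 = \phi^+(Y^\bot)$ forces $\Hom(Z, Y^\bot) = 0$, so $q = 0$, contradicting that $q$ is a nonzero projection onto a nonzero HN factor.

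The top phase inequality is handled by the mirror-image argument. Setting $\phi_0 = \phi^+(Y)$ and assuming $\phi_0 > \max\{\phi^+(X), \phi^+(Z)\}$, one takes the canonical inclusion $i \from Y^\top \to Y$ of the top HN factor and composes with $Y \to Z$. The composition vanishes because $\phi^-(Y^\top) = \phi_0 > \phi^+(Z)$, so $i$ factors through $X \to Y$; then $\phi^-(Y^\top) > \phi^+(X)$ kills this factored map, contradicting $i \neq 0$.

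There is really no main obstacle here: the argument is entirely formal once one has the HN filtration and the vanishing lemma. The only subtle point is remembering that $Y^\bot$ is a quotient and $Y^\top$ is a subobject in the convention fixed in \autoref{sec:background}, and correspondingly using the two different ways of factoring maps through the triangle $X \to Y \to Z \xrightarrow{+1}$ (via the cofiber $Z$ in the bottom case, via the fiber $X$ in the top case).
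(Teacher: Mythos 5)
Your proof is correct and is essentially the paper's argument: both factor the nonzero projection $Y \to Y^{\bot}$ through the triangle and invoke the vanishing $\Hom(A,B)=0$ for $\phi^-(A) > \phi^+(B)$, the only difference being that you phrase it as a contradiction while the paper states it directly ("there is either a nonzero map $X \to Y^{\bot}$ or a nonzero map $Z \to Y^{\bot}$").
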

\begin{proof}
  We prove the first inequality; the second is analogous.
  We have a nonzero map $Y \to Y^{\bot}$.
  As a result, there is either a nonzero map $X \to Y^{\bot}$ or a nonzero map $Z \to Y^{\bot}$.
  This shows that $\phi(Y^{\bot}) \geq \phi^-(X)$ or $\phi(Y^{\bot}) \geq \phi^-(Z)$.
  Equivalently, $\phi^-(Y) \geq \min\{\phi^-(X),\phi^-(Z)\}$.
\end{proof}

We now investigate the effect of applying suitable spherical twists on the bottom and the top phase.
\begin{proposition}
  \label{prop:improvement1}
  Let $X$ be a spherical stable object of $\calC$ such that $X$ is the unique stable object of its phase.
  Let $Y$ be any object of $\calC$.
  We have the following.
  \begin{enumerate}
  \item If $\phi(X) \leq \phi^-(Y)$, then $\phi(X) < \phi^-(\sigma_X^{-1}(Y))$.
  \item If $\phi^+(Y) \leq \phi(X)$, then $\phi^+(\sigma_X(Y)) < \phi(X)$.
  \end{enumerate}
\end{proposition}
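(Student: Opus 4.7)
Plan: The two statements are symmetric, so I would focus on part~(1); part~(2) would follow from the analogous argument using \eqref{eqn:positive-twist}. The tools are the rotation of \eqref{eqn:negative-twist},
\[ X \otimes \Hom(Y, X)^\vee[-1] \to \sigma_X^{-1}(Y) \to Y \xrightarrow{+1}, \]
the sandwich lemma (\autoref{prop:sandwich}), the adjunction $\Hom(\sigma_X^{-1}(Y), Z) \cong \Hom(Y, \sigma_X Z)$ available because $\sigma_X$ is an auto-equivalence, and the computation $\sigma_X(X) \cong X[-1]$ forced by the $2$-Calabi--Yau condition.

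I would split into two cases. If $\phi^-(Y) > \phi(X)$, then stability vanishing forces $\Hom^i(Y, X) = 0$ for all $i \leq 0$, so the left-hand term of the triangle decomposes as a sum of shifts $X[i-1]$ with $i \geq 1$ and thus lies in phases $\geq \phi(X)$. The sandwich lemma then gives $\phi^-(\sigma_X^{-1}(Y)) \geq \phi(X)$. To upgrade this to a strict inequality I would argue by contradiction: if equality held, the bottom HN factor of $\sigma_X^{-1}(Y)$ would be semistable of phase $\phi(X)$ and, by uniqueness of $X$ as a stable object of its phase together with $\Hom^1(X, X) = 0$, must equal $X^m$ for some $m \geq 1$. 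This produces a nonzero morphism $\sigma_X^{-1}(Y) \to X$, which under the adjunction corresponds to a morphism $Y \to X[-1]$; but the latter vanishes in degree~$0$ because $\phi^-(Y) > \phi(X) > \phi(X) - 1$.

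In the remaining case $\phi^-(Y) = \phi(X)$, the same uniqueness reasoning identifies $Y^\bot$ with $X^m$. The HN truncation triangle $Y_{n-1} \to Y \to X^m$ then has $\phi^-(Y_{n-1}) > \phi(X)$ strictly, and applying $\sigma_X^{-1}$ yields $\sigma_X^{-1}(Y_{n-1}) \to \sigma_X^{-1}(Y) \to X^m[1]$. The strict case handled above applies to $Y_{n-1}$ and gives $\phi^-(\sigma_X^{-1}(Y_{n-1})) > \phi(X)$; combined with $\phi(X^m[1]) = \phi(X) + 1$, the sandwich lemma completes the argument. The main obstacle throughout is that the sandwich lemma alone yields only a non-strict bound; the strict inequality hinges on combining the adjunction with the uniqueness of $X$, and this same mechanism is what forces the edge case $\phi^-(Y) = \phi(X)$ to be handled by a separate HN split.
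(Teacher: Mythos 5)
Your proposal is correct and follows essentially the same route as the paper's proof: the same rotated twist triangle plus the sandwich lemma for the non-strict bound, the same adjunction argument via $\sigma_X(X)\cong X[-1]$ and uniqueness of $X$ for strictness, and the same HN-truncation reduction $Z \to Y \to X^{\oplus n}$ for the edge case $\phi^-(Y)=\phi(X)$. The only cosmetic difference is that you identify the bottom HN factor as $X^{\oplus m}$ using $\Hom^1(X,X)=0$, where the paper merely extracts a nonzero map to the stable object $X$; both suffice.
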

\begin{proof}
  We prove the first inequality; the second is analogous.
  The key is the exact triangle
  \begin{equation}\label{eq:rotated-neg-twist-triangle}
    X \otimes \HHom(Y,X)^\vee[-1] \to \sigma_X^{-1}(Y) \to Y \xrightarrow{+1},
  \end{equation}
  obtained by rotating the triangle in \eqref{eqn:negative-twist} in \autoref{sec:background}.
  
  We first prove the proposition assuming the strict inequality
  \begin{equation}\label{eqn:phase1}
    \phi(X) < \phi^-(Y).
  \end{equation}
  Let $V = \HHom(Y,X)^\vee[-1]$.
  Since $V$ is a complex of vector spaces, it is quasi-isomorphic to a direct sum of shifts of copies of $\k$.
  The inequality \eqref{eqn:phase1} implies that $\Hom^i(Y,X) = 0$ for $i \leq 0$.
  Therefore, $V$ is a direct sum of copies of $\k[j]$ for $j \geq 0$.
  As a result, we have
  \begin{equation}\label{eqn:phase2}
    \phi(X) \leq \phi^-(X \otimes V).
  \end{equation}
  Thanks to \eqref{eqn:phase1} and \eqref{eqn:phase2}, we can apply the sandwich lemma (\autoref{prop:sandwich}) to the key triangle \eqref{eq:rotated-neg-twist-triangle} to get
  \[\phi(X) \leq \phi^-(\sigma_X^{-1}(Y)).\]
  
  To show that the inequality is strict, it suffices to show that $\sigma_X^{-1}Y$ does not have a nonzero map to any stable object of phase $\phi(X)$.
  By our assumption, the only such stable object is $X$ itself.
  Consider a map $f \from \sigma_X^{-1}(Y) \to X$.
  Applying $\sigma_X$ gives a map $\sigma_X(f) \from Y \to X[-1]$.
  Since $\phi(X[-1]) < \phi(X) < \phi^-(Y)$, the map $\sigma_X(f)$ must be zero.
  Therefore $f$ is zero.
  The proof is thus complete, assuming $\phi(X) < \phi^-(Y)$.

  We now treat the case $\phi(X) = \phi^-(Y)$.
  By our assumption on $X$, in this case, $Y^{\bot}$ must be a direct sum of copies of $X$.
  This means that we have an exact triangle
  \begin{equation}\label{eqn:Z-triangle}
    Z \to Y \to X^{\oplus n} \xrightarrow{+1},
  \end{equation}
  where $\phi(X) < \phi^-(Z)$.
  The previous argument now applies to $Z$, and we get $\phi(X) < \phi^{-1}\left(\sigma_X^{-1}Z\right)$.
  Applying $\sigma_X^{-1}$ to the triangle in \eqref{eqn:Z-triangle} gives the triangle
  \begin{equation}\label{eqn:sZ-triangle}
    \sigma_X^{-1}(Z) \to \sigma_X^{-1}(Y) \to X[1]^{\oplus n} \xrightarrow{+1}.
  \end{equation}
  By applying the sandwich lemma (\autoref{prop:sandwich}) to \eqref{eqn:sZ-triangle}, we conclude that $\phi(X) < \phi^-(\sigma_X^{-1}(Y))$.
\end{proof}

The following shows that the improvement on one end achieved by \autoref{prop:improvement1} does not cause a deterioration at the other end, assuming that the object $Y$ has a sufficiently large phase spread.
\begin{proposition}
  \label{prop:improvement2}
  Let $X$ be a spherical stable object of $\calC$.
  Assume that $X$ is the unique stable object of its phase.
  Let $Y$ be any object of $\calC$ such that $\Hom^{i}(Y,Y) = 0$ for any $i < 0$ and $\phi^+(Y)-\phi^-(Y) \geq 1$.
  The following hold.
  \begin{enumerate}
  \item If $\phi^-(Y) = \phi(X)$, then $\phi^+(Y) \geq \phi^+(\sigma_X^{-1}(Y))$.
  \item If $\phi^+(Y) = \phi(X)$, then $\phi^-(Y) \leq \phi^-(\sigma_X(Y))$.
  \end{enumerate}
\end{proposition}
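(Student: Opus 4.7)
Both statements will follow the template of \autoref{prop:improvement1}: apply the sandwich lemma (\autoref{prop:sandwich}) to the defining triangles
\[
X \otimes \HHom(Y,X)^\vee[-1] \to \sigma_X^{-1}(Y) \to Y \xrightarrow{+1}, \qquad Y \to \sigma_X(Y) \to (\HHom(X,Y) \otimes X)[1] \xrightarrow{+1}
\]
(the second obtained by rotating \eqref{eqn:positive-twist}). This reduces (1) to showing $\phi^+(X \otimes \HHom(Y,X)^\vee[-1]) \leq \phi^+(Y)$ and (2) to showing $\phi^-(\HHom(X,Y) \otimes X) \geq \phi^-(Y) - 1$. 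Since $X$ is spherical, both tensor products are direct sums of shifts of $X$, indexed by the non-zero graded pieces of the relevant graded $\Hom$-space.

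Set $d = \phi^+(Y) - \phi^-(Y) \geq 1$ and $\phi = \phi(X)$. Unwinding the shifts, the two estimates become
\[
\Hom^j(Y,X) = 0 \text{ for all } j > d+1 \quad \text{(in (1))}, \qquad \Hom^i(X,Y) = 0 \text{ for all } i > d+1 \quad \text{(in (2))}.
\]
The definition of a stability condition gives $\Hom^i(X,Y) = 0$ for $i < -d$ and $\Hom^j(Y,X) = 0$ for $j < -d$; combining with 2-Calabi--Yau duality $\Hom^j(Y,X) \cong \Hom^{2-j}(X,Y)^\vee$ yields the weaker bounds $j \leq d+2$ and $i \leq d+2$. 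So in each case the work reduces to ruling out a single borderline degree: $\Hom^{-d}(X,Y) = 0$ in (1) and $\Hom^{-d}(Y,X) = 0$ in (2).

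Case (2) follows from a direct Harder--Narasimhan computation. Any map $Y \to X[-d]$ factors through $Y^\bot$ (the higher HN pieces map to zero since their phase exceeds $\phi-d$), and $Y^\bot$ is semistable of phase $\phi = \phi^+(Y)$, hence---using uniqueness of $X$ in its phase together with $\Ext^1(X,X) = 0$---is a direct sum $X^{\oplus m}$. But $\Hom(X^{\oplus m}, X[-d]) = \Hom^{-d}(X,X)^{\oplus m} = 0$ since $d \geq 1$ and $X$ is spherical. This argument does not use the hypothesis $\Hom^{<0}(Y,Y) = 0$.

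The main obstacle is case (1), where the parallel computation gives $\Hom^{-d}(X,Y) = \Hom(X, Y^\top[-d]) = \Hom(X, X^{\oplus k}) = \k^k$ with $Y^\top = X[d]^{\oplus k}$, which is nonzero when $k \geq 1$. The plan is to use $\Hom^{<0}(Y,Y) = 0$ to force $k = 0$ by contradiction: assuming $k \geq 1$, I will exhibit a nonzero element of $\Hom^{-d}(Y,Y)$. First, the HN quotient $Y \to Y^\bot = X^{\oplus m}$ composed with projection to a summand produces a nonzero map $Y \to X$ (note $m \geq 1$, and $Y$ has at least two HN factors since $d \geq 1$); consequently $\Hom^{-d}(Y, Y^\top) = \Hom(Y, X)^{\oplus k} \neq 0$. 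Applying $\Hom(Y,-)$ to the HN triangle $Y^\top \to Y \to Y/Y^\top \xrightarrow{+1}$ and checking $\Hom^{-d-1}(Y, Y/Y^\top) = 0$---which holds by the phase inequality $\phi^+((Y/Y^\top)[-d-1]) < \phi - 1 < \phi = \phi^-(Y)$---shows that $\Hom^{-d}(Y, Y^\top)$ injects into $\Hom^{-d}(Y,Y)$. Since $-d < 0$, the resulting nonzero element of $\Hom^{-d}(Y,Y)$ contradicts the hypothesis.
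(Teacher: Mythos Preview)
Your overall strategy matches the paper's: reduce via the sandwich lemma to bounding the top nonvanishing degree of $\Hom^*(Y,X)$ or $\Hom^*(X,Y)$, then control that degree using Serre duality and the hypothesis $\Hom^{<0}(Y,Y)=0$. But both of your case arguments have genuine gaps.

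\textbf{Case (2).} You write that ``$Y^{\bot}$ is semistable of phase $\phi=\phi^+(Y)$'' and hence $Y^{\bot}\cong X^{\oplus m}$. This is wrong: by definition $Y^{\bot}$ has phase $\phi^-(Y)=\phi(X)-d$, not $\phi^+(Y)=\phi(X)$. For integer $d$ one gets $Y^{\bot}\cong X[-d]^{\oplus m}$ instead, whence $\Hom(Y^{\bot},X[-d])\cong\k^m\neq 0$; composing with the HN surjection $Y\to Y^{\bot}$ shows $\Hom^{-d}(Y,X)\neq 0$, so the vanishing you are aiming for actually fails. In particular case~(2) \emph{does} require the hypothesis $\Hom^{<0}(Y,Y)=0$; the correct argument is the mirror of the paper's case~(1), using $Y^{\top}\cong X^{\oplus k}$ (valid since $\phi^+(Y)=\phi(X)$) rather than $Y^{\bot}$.

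\textbf{Case (1).} Your identification $Y^{\top}=X[d]^{\oplus k}$ tacitly assumes that $d$ is an integer; otherwise $X[d]$ is undefined, and nothing forces the stable objects of phase $\phi(X)+d$ to be shifts of $X$. The hypothesis only gives $d=\phi^+(Y)-\phi^-(Y)\in\bbR_{\geq 1}$. (For integer $d$ your contradiction argument does go through, but since $k\geq 1$ is automatic it actually shows the hypotheses are inconsistent in that case---fine, but non-integer $d$ remains untreated.) The paper avoids this by working at the \emph{bottom} instead: since $\phi^-(Y)=\phi(X)$ is the hypothesis, $Y^{\bot}\cong X^{\oplus m}$ holds with no integrality assumption. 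One then takes a nonzero $f\in\Hom^{2-\ell}(Y^{\bot},Y)$ (from Serre duality, for $\ell>2$), observes that the composite $Y\to Y^{\bot}\xrightarrow{f} Y[2-\ell]$ lies in $\Hom^{2-\ell}(Y,Y)=0$, factors $f$ through $K[1]$ where $K\to Y\to Y^{\bot}$, and compares phases to obtain $\phi^+(Y)>\phi(X)+\ell-1$.
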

\begin{proof}
  We prove the first statement; the second is analogous.
  Again, the key is the exact triangle
  \begin{equation}\label{eqn:key2}
    X \otimes \HHom(Y,X)^\vee[-1] \to \sigma_X^{-1}(Y) \to Y  \xrightarrow{+1},
  \end{equation}
  obtained by rotating the triangle \eqref{eqn:negative-twist} in \autoref{sec:background}.
  Let $V = \HHom(Y,X)^\vee[-1]$.
  By the sandwich lemma (\autoref{prop:sandwich}), it suffices to show that
  \begin{equation}\label{eqn:to-show}
    \phi^+(Y) \geq \phi^+(X\otimes V).
  \end{equation}

  Let us compute $\phi^+(X \otimes V)$.
  Let $\ell$ be the largest integer such that $\Hom^\ell(Y,X) \neq 0$.
  Then $V$ is a direct sum of copies of $\k[j]$ with $j \leq \ell - 1$, and including at least one copy of $\k[\ell - 1]$.
  Therefore, we get
  \begin{equation}\label{eqn:lowest-hom}
    \phi^+(X \otimes V) = \phi(X[\ell - 1]).
  \end{equation}
  Thus, showing \eqref{eqn:to-show} is equivalent to showing 
  \[ \phi^+(Y) \geq \phi(X[\ell-1]).\]
  
  First suppose $\ell \leq 2$.
  Then we have
  \[ \phi^-(Y) + 1 = \phi(X[1]) \geq \phi(X[\ell - 1]).\]
  Since $\phi^+(Y) - \phi^-(Y) \geq 1$, we conclude that
  \[ \phi^+(Y) \geq \phi(X[\ell - 1]), \]
  as desired.
  
  Next suppose $\ell > 2$.
  Then the 2-Calabi--Yau property implies that
  \[
    \Hom^{2-\ell}(X, Y) \neq 0.
  \]
  By our assumptions on $X$ and $Y$, the object $Y^{\bot}$ is a direct sum of copies of $X$.
  Therefore, we also have
  \[ \Hom^{2 - \ell}(Y^{\bot}, Y) \neq 0.\]
  Define the object $K$ by the following exact triangle:
  \[ K \to Y \to Y^{\bot} \xrightarrow{+1}.\]
  Let $f \in \Hom^{2-\ell}(Y^{\bot}, Y)$ be a non-zero element.
  The composition of $Y \to Y^{\bot}$ with $f$ gives a map $Y \to Y[2-\ell]$.
  Since this is a map of negative degree from $Y$ to itself, it must be zero.
  Therefore $f$ factors as the composite of $Y^{\bot} \to K[1]$ with a (non-zero) map $g \from K[1] \to Y[2 - \ell]$
  \[
    \begin{tikzcd}
      Y \arrow{r} \arrow{dr}{0}& Y^{\bot} \arrow{r}\arrow{d}{f}& K[1] \arrow{r}{+1} \arrow{dl}{g}& {} \\
      & Y[2-\ell].& 
    \end{tikzcd}
  \]
  Since $g$ is non-zero, we get
  \[ \phi^+(Y[2-\ell]) \geq \phi^-(K[1]).\]
  By construction, we have
  \[ \phi^-(K) > \phi^-(Y) = \phi(X). \]
  By combining the last two inequalities, we see that
  \[ \phi^+(Y[2 - \ell]) > \phi(X[1]).\]
  Therefore, we get
  \[ \phi^+(Y) > \phi(X[\ell - 1]),\]
  as desired.
\end{proof}

The following is an analogue of \autoref{prop:improvement2} for $Y$ of small spread.
\begin{proposition}
  \label{prop:improvement3}
  Let $X$ be a spherical stable object of $\calC$.
  Assume that $X$ is the unique stable object of its phase.
  Let $Y$ be any object of $\calC$ such that $\phi^+(Y) - \phi^-(Y) < 1$.
  Assume, furthermore, that $\Hom^0(Y,Y) = \k$ and that $X$ is not a direct summand of $Y$.
  The following hold.
  \begin{enumerate}
  \item If $\phi^-(Y) = \phi(X)$, then $\phi^+(Y) \geq \phi^+(\sigma_X^{-1}(Y))$.
  \item If $\phi^+(Y) = \phi(X)$, then $\phi^-(Y) \leq \phi^-(\sigma_X(Y))$.
  \end{enumerate}
\end{proposition}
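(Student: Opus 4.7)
The plan is to mirror \autoref{prop:improvement2}. I would start with the same rotated inverse-twist triangle
\[ X \otimes V \to \sigma_X^{-1}(Y) \to Y \xrightarrow{+1}, \]
where $V = \HHom(Y, X)^\vee[-1]$, and apply the sandwich lemma (\autoref{prop:sandwich}). This reduces the first statement to proving $\phi^+(X \otimes V) \leq \phi^+(Y)$, with nothing to show if $V = 0$. Letting $\ell$ be the largest integer with $\Hom^\ell(Y, X) \neq 0$, the computation in \autoref{prop:improvement2} yields $\phi^+(X \otimes V) = \phi(X) + \ell - 1$, so the target inequality becomes $\ell - 1 \leq \phi^+(Y) - \phi^-(Y)$. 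The new hypothesis that the spread of $Y$ is strictly less than $1$ forces $\ell \leq 1$.

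I would dispose of $\ell \geq 3$ exactly as in the corresponding regime of \autoref{prop:improvement2}. The 2-Calabi--Yau property gives $\Hom(X[\ell - 2], Y) \neq 0$, but $X[\ell - 2]$ is stable of phase $\phi(X) + \ell - 2 \geq \phi(X) + 1 > \phi^+(Y)$, contradicting the Hom-vanishing between semistables of separated phase.

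The main obstacle is ruling out $\ell = 2$; this is where the new hypotheses $\Hom^0(Y, Y) = \k$ and ``$X$ not a direct summand of $Y$'' enter. From $\Hom^2(Y, X) \neq 0$ and 2-Calabi--Yau duality I get $\Hom^0(X, Y) \neq 0$; and since $Y^{\bot} \cong X^{\oplus n}$ for some $n \geq 1$ (as $X$ is the unique stable of its phase), this produces a nonzero $g \from Y^{\bot} \to Y$. I would then apply $\HHom(-, Y)$ to the HN triangle $K \to Y \to Y^{\bot} \xrightarrow{+1}$. The map $\Hom^0(Y^{\bot}, Y) \to \Hom^0(Y, Y)$ sends $g$ to the composition $Y \to Y^{\bot} \xrightarrow{g} Y$, and in the long exact sequence its kernel is the image of $\Hom^{-1}(K, Y)$. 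A short phase computation using $\phi^-(K) > \phi(X)$ together with $\phi^+(Y) < \phi(X) + 1$ yields $\phi^-(K) > \phi^+(Y[-1])$ and hence $\Hom^{-1}(K, Y) = 0$. Therefore the composition is a nonzero scalar in $\Hom^0(Y, Y) = \k$, and after rescaling $g$ it equals $\id_Y$. This exhibits $Y$ as a direct summand of $X^{\oplus n}$. Since $X$ is spherical we have $\End^0(X) = \k$, so summands of $X^{\oplus n}$ have the form $X^{\oplus k}$; the hypothesis $\Hom^0(Y, Y) = \k$ then forces $k = 1$, i.e.\ $Y \cong X$, contradicting $X$ not being a summand of $Y$. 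The second statement of the proposition follows by the symmetric argument applied to the positive twist triangle.
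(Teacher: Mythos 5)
Your proof is correct, and its skeleton coincides with the paper's: the same rotated inverse-twist triangle, the sandwich lemma, the reduction to bounding the top degree $\ell$ of $\HHom(Y,X)$, the elimination of $\ell \geq 3$ by Calabi--Yau duality plus phase separation, and the identification $Y^{\bot} \cong X^{\oplus n}$. Where you genuinely diverge is the crux, ruling out $\ell = 2$. The paper starts from a nonzero $i \from X \to Y$ and shows the composite $X \to Y \to Y^{\bot}$ is nonzero by proving $\Hom^0(X,K) = 0$, a vanishing it extracts from $\Hom^0(Y,Y) = \k$ (a nonzero map $X \to K$ would manufacture a nonzero non-invertible endomorphism of $Y$); sphericity of $X$ then makes $\pi \circ i$ invertible, so $X$ splits off $Y$, contradicting the hypothesis directly. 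You go in the opposite direction: you take a nonzero $g \from Y^{\bot} \to Y$ and show the composite $Y \to Y^{\bot} \xrightarrow{g} Y$ is nonzero via $\Hom^{-1}(K,Y) = 0$, which follows from a pure phase count ($\phi^-(K) > \phi(X) > \phi^+(Y) - 1$) rather than from the endomorphism hypothesis; this splits $Y$ off $X^{\oplus n}$, and $\Hom^0(X,X) = \k$ together with $\Hom^0(Y,Y) = \k$ then forces $Y \cong X$. Your route replaces the paper's somewhat delicate ``no maps $X \to K$'' step with an easier Hom-vanishing by phases, at the price of an extra (but standard) step identifying retracts of $X^{\oplus n}$ as $X^{\oplus k}$ via idempotents in the matrix algebra $M_n(\k)$. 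Both arguments are sound; the only cosmetic point is that your phrase ``the hypothesis forces $\ell \leq 1$'' should read ``it suffices to show $\ell \leq 1$,'' which is exactly what the remainder of your argument establishes.
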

\begin{proof}
  We prove the first statement; the second is analogous.
  We begin as in the proof of \autoref{prop:improvement2}.
  Consider the triangle
  \begin{equation}\label{eqn:key3}
    X \otimes \HHom(Y,X)^\vee[-1] \to \sigma_X^{-1}(Y) \to Y  \xrightarrow{+1}.
  \end{equation}
  Let $V = \HHom(Y,X)^\vee [-1]$.
  It suffices to show that
  \[\phi^+(Y) \geq \phi^+(X \otimes V).\]
  Let $\ell$ be the largest integer such that $\Hom^\ell(Y, X) \neq 0$.
  Then we must show that
  \[ \phi^+(Y) \geq \phi(X[\ell - 1]).\]
  
  By the 2-Calabi--Yau property, we have $\Hom^\ell(Y,X) \cong \Hom^{2-\ell}(X,Y)^\vee$.
  Since
  \[\phi^+(Y) < \phi^-(Y) + 1 = \phi(X) + 1,\]
  there cannot be a non-zero map from $X$ to $Y[k]$ for $k < 0$.
  As a result, we must have $\ell \leq 2$.

  First suppose $\ell \leq 1$.
  Then we have
  \[ \phi^+(Y) \geq \phi^-(Y) = \phi(X) \geq \phi(X[\ell - 1]),\]
  as desired.

  We rule out $\ell = 2$.
  Let us show that if $\ell = 2$, then $X$ must be a direct summand of $Y$.
  
  Let $\mathcal P$ be the slicing defined by the stability condition $\tau$.
  Since $\phi^+(Y) - \phi^-(Y) < 1$ and $\phi(X) = \phi^-(Y)$, both $X$ and $Y$ lie in the abelian category $\mathcal P[\alpha, \alpha+1)$ for $\alpha = \phi^-(Y)$.
  By our assumptions, $Y^{\bot}$ is a direct sum of copies of $X$, say $Y^{\bot} = X^{\oplus n}$.

  If $\ell = 2$, we have a non-zero map $i \from X \to Y$.
  Consider the composite
  \begin{equation}\label{eqn:composite}
    X \xrightarrow{i} Y \to Y^{\bot}.
  \end{equation}
  We show that the composite is non-zero.
  Equivalently, we must show that $i$ does not factor through the kernel $K$ of $Y \to Y^{\bot}$.
  In fact, let us prove that there are no non-zero maps from $X$ to $K$.

  Since $\Hom^0(Y,Y) = \k$, every non-zero map from $Y$ to itself is an isomorphism.
  A non-zero map $X \to K$ gives a non-zero map $Y \to Y$ that is not an isomorphism, namely the composite
  \[Y \twoheadrightarrow Y^{\bot} = X^{\oplus n} \twoheadrightarrow X \to K \hookrightarrow Y.\]
  Therefore, there are no non-zero maps $X \to K$.

  Since we have $Y^{\bot} = X^{\oplus n}$ and the composite in \eqref{eqn:composite} is non-zero, there is a map $\pi \from Y \to X$ such that $\pi \circ i \from X \to X$ is non-zero.
  But $X$ is spherical, so $\pi \circ i$ must be an isomorphism.
  That is, $X$ is a direct summand of $Y$, as desired.
\end{proof}
\begin{remark}
  In \autoref{prop:improvement1}, \autoref{prop:improvement2}, \autoref{prop:improvement1}, suppose we know \emph{a priori} that the stable factors of \(Y\), and any spherical twist applied to \(Y\), are spherical.
  This is true, for example, if \(Y\) is itself a spherical object, or a direct sum of spherical objects (see \cite[Corollary~2.3]{huy:12}).
  Then we may weaken the uniqueness assumption on \(X\) to the following: \(X\) is the unique \emph{spherical} stable object of its phase.
\end{remark}

\section{Spherical stable objects in quiver categories}\label{sec:sphericalstable}
In this section we will consider the 2-Calabi--Yau category associated to an arbitrary quiver (not necessarily of finite type), as defined, e.g.,  in~\cite[Section 2]{bap.deo.lic:20*1}.

In the remainder of this section, take $\Gamma$ to be an arbitrary quiver, not necessarily of finite type.
Let $\calC$ be the 2-Calabi--Yau category associated to $\Gamma$ (see, e.g. \cite[Section 2]{bap.deo.lic:20*1}).

The Grothendieck group $K(\calC)$ with the $\Hom$ pairing is naturally identified with the root lattice of $\Gamma$.
Since $\dim(\Hom^*(X,X)) = 2$ for any spherical object $X$, its class $[X]$ in $K(\calC)$ is a real root.

Let $\tau$ be a stability condition on $\calC$ with central charge $Z \from K(\calC) \to \mathbb C$ and slicing $\mathcal P$.
\begin{proposition}\label{prop:unique-spherical-stable}
  Assume that $\tau$ is generic in the following sense: $Z$ maps distinct real roots to complex numbers of distinct phase.
  Suppose $X$ is a $\tau$-semistable spherical object.
  Then $X$ is $\tau$-stable, and it is the unique $\tau$-stable spherical object of phase $\phi = \phi(X)$.
\end{proposition}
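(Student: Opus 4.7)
The plan is to split the statement into two claims: first that $X$ is $\tau$-stable, and second that it is the unique $\tau$-stable spherical of phase $\phi(X)$.

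For stability, I would decompose $X$ in the abelian category $\mathcal{P}(\phi)$ via a Jordan--Hölder filtration with stable factors $S_1,\dots,S_n$ (counted with multiplicity). By an argument analogous to Huybrechts' treatment of HN factors \cite[Corollary~2.3]{huy:12}, each $S_i$ is itself spherical, so $[S_i]$ is a real root. Since every $S_i$ has phase $\phi$, the genericity hypothesis on $\tau$ forces the $[S_i]$ to coincide with a single real root $v$. Summing multiplicities, $[X] = nv$; but $[X]$ is a real root, so $2 = [X] \cdot [X] = 2n^2$, giving $n = 1$. Hence $X$ has a single JH factor and is stable.

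For uniqueness, suppose $X, Y$ are $\tau$-stable spherical of phase $\phi$ with $X \not\cong Y$. Their classes are real roots of the same phase, so by genericity $[X] = [Y] =: v$. Stability of both plus the 2-Calabi--Yau property kills $\Hom^0(X, Y)$ and $\Hom^2(X, Y) = \Hom^0(Y, X)^\vee$, while $X, Y$ lying in the heart $\mathcal{P}((\phi - 1, \phi])$ kills $\Hom^i(X, Y)$ for $i \notin \{0, 1, 2\}$. Thus $\dim \Hom^1(X, Y) = \langle X, Y \rangle = v \cdot v = 2$, so the Euler form satisfies $\chi(X, Y) = -2$. The defining triangle of $\sigma_X$ then yields, in $K(\calC)$,
\[ [\sigma_X(Y)] = [Y] - \chi(X, Y)[X] = v + 2v = 3v.\]
Since $\sigma_X$ is an autoequivalence and $Y$ is spherical, so is $\sigma_X(Y)$, making $[\sigma_X(Y)]$ a real root of norm $2$; but $(3v) \cdot (3v) = 18 \ne 2$, a contradiction. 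Hence $X \cong Y$.

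The main technical point to verify is that Jordan--Hölder factors (rather than merely Harder--Narasimhan factors) of a spherical object remain spherical in $\mathcal{P}(\phi)$; Huybrechts' argument handles the HN case directly, and its adaptation to JH factors should be routine but warrants care. Once this is in hand, the remainder is a direct consequence of genericity together with the reflection formula $[\sigma_X(Y)] = [Y] - \chi(X, Y)[X]$ and the fact that a real root cannot be a nontrivial integer multiple of another real root.
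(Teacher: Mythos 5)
Your first half (stability of $X$) is correct and is essentially the paper's argument in different packaging: the paper takes a single simple sub-object $S \subset X$ in $\mathcal P(\phi)$, invokes the same result of Huybrechts to see that $S$ is spherical, and concludes $[S]=[X]$ by genericity, whence $[X/S]=0$ and $X=S$; your Jordan--H\"older count $[X]=nv$, $2=2n^2$, $n=1$ is equivalent bookkeeping. Your worry about stable (Jordan--H\"older) versus Harder--Narasimhan factors is not an issue: \cite[Corollary~2.3]{huy:12} covers stable factors and is cited by the paper for exactly this purpose.

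The uniqueness half contains a genuine error. The pairing on $K(\calC)$ that is identified with the root-lattice form is the Euler form $\chi(X,Y)=\sum_i(-1)^i\dim\Hom^i(X,Y)$; the unsigned sum written in \autoref{sec:background} does not even descend to the Grothendieck group, since it is unchanged by shifting $Y$ while $[Y[1]]=-[Y]$. So from $[X]=[Y]=v$ and the vanishing of $\Hom^i(X,Y)$ for $i\neq 1$ you get $-\dim\Hom^1(X,Y)=\chi(X,Y)=v\cdot v=2$, not $\dim\Hom^1(X,Y)=2$. Consequently your value $\chi(X,Y)=-2$, and the ensuing computation $[\sigma_X(Y)]=3v$, are unjustified. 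The correct sign actually makes the argument shorter: $-\dim\Hom^1(X,Y)=2$ is already absurd, so the assumption $X\not\cong Y$ (which is what killed $\Hom^0$ and $\Hom^2$) must fail, and no appeal to $\sigma_X(Y)$ is needed. This is precisely the paper's argument run in the contrapositive direction: from $\chi(X,Y)=2$ it deduces that $\Hom^0(X,Y)$ or $\Hom^2(X,Y)\cong\Hom^0(Y,X)^\vee$ is nonzero, hence there is a degree-zero map between the simple objects $X$ and $Y$ of $\mathcal P(\phi)$, hence $X\cong Y$.
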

\begin{proof}
  To show that $X$ is stable, we must show that $X$ is simple in the abelian category $\mathcal P(\phi)$.
  Let $S \subset X$ be a non-zero simple sub-object.
  By \cite[Corollary~2.3]{huy:12}, $S$ must be spherical.
  Then the class of $S$ in $K(\calC)$ is a real root.
  Since $Z(S)$ has the same argument as $Z(X)$, the genericity assumption on $\tau$ means that $X = S$ in $K(\calC)$.
  But then $X/S = 0$ in $K(\calC)$, and since $X/S$ is in a heart of $\tau$, this implies that $X = S$.

  Next, suppose $Y$ is another $\tau$-stable spherical object of the same phase as $X$.
  Again by the genericity of $\tau$, we have $X = Y$ in $K(\calC)$.
  But then we get
  \begin{align*}
    \langle X,Y \rangle & = \dim \Hom^0(X,Y) - \dim \Hom^1(X,Y) + \dim \Hom^2(X,Y) \\
                     & = \langle X, X\rangle = 2,
  \end{align*}
  which implies $\Hom^0(X,Y) \neq 0$ by the Calabi--Yau property.
  Since $X$ and $Y$ are simple objects of $\mathcal P(\phi)$, this forces $X \cong Y$.
\end{proof}

For every simple root $v$, we have a spherical object $P_v \in \mathcal C$.
Recall that the extension closure of these objects is the heart of a bounded $t$-structure on $\mathcal C$.
We call this the \emph{standard heart} and denote it $\heart$.
The objects $P_v$ corresponding to simple roots are simple in $\heart$.
We say that a stability condition is \emph{standard} if its \([0,1)\) heart is $\heart$.
We now give an effective construction of a $\tau$-stable spherical object of every possible class, for a generic standard stability condition $\tau$.

Let $w$ be an arbitrary positive root.
Write
\begin{equation}\label{eqn:wv}
  w = s_{v_n} \cdots s_{v_1} v,
\end{equation}
where $v$ is a simple root and $s_{v_1}, \dots, s_{v_n}$ are reflections in the simple roots $v_1, \dots, v_n$.
Set $v_0 = v$.
Associate to \eqref{eqn:wv} a sequence of roots $R_0, \dots, R_n$ defined by
\[
  R_i = s_{v_n} \cdots s_{v_{i+1}}(v_i).
\]
Note that $R_0 = w$ and if \eqref{eqn:wv} is a minimal expression for $w$, then all the roots in the root sequence are positive.

Let $\epsilon_1, \dots, \epsilon_n$ be $\pm 1$.
Consider an object $X$ of $\mathcal C$ defined by
\begin{equation}\label{eqn:lift}
  X =  \sigma^{\epsilon_n}_{v_{n}} \circ \dots \circ \sigma_{v_{1}}^{\epsilon_1} (P_{v}),
\end{equation}
where $\sigma_{v_i}$ is the spherical twist in $P_{v_i}$.
The $\epsilon$'s allow us to divide the root sequence into \emph{positive} and \emph{negative} sub-sequences, defined by
\begin{align*}
  R_+ &= \left( R_i \mid \epsilon_i = 1 \right), \\
  R_- &= \left( R_i \mid \epsilon_i = - 1 \right).\\
\end{align*}
We call $R_0$ the \emph{neutral root}.

Let $\tau$ be a stability condition on $\calC$ such that the $[0,1)$ heart of $\tau$ is the standard heart.
Let $Z$ be the central charge of $\tau$.
Let $\mathbb H \subset \mathcal C$ be the half-open upper half plane:
\[ \mathbb H = \{z \mid \Im(z) > 0\} \cup \mathbb R_{> 0}.\]
Let $\alpha = Z(R_0)$.
Since $R_0$ is a positive root, $Z(R_0)$ lies in $\mathbb H$.
It divides $\mathbb H$ into two pieces
\begin{align*}
  \mathbb H_+ &= \{z \mid \arg z > \arg \alpha\}, \\
  \mathbb H_- &= \{z \mid \arg z < \arg \alpha\},
\end{align*}
where $\arg$ is taken in $[0,\pi)$.

\autoref{fig:rootsequence} shows an example of the construction above for a stability condition on the $A_3$-category.

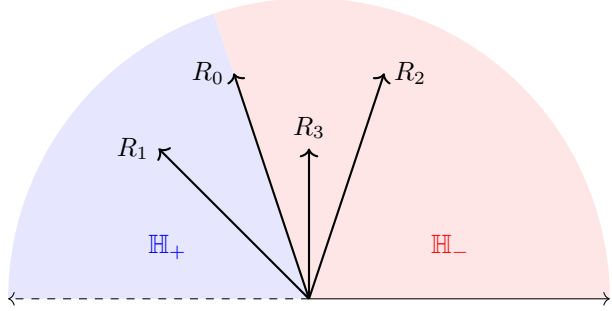
\begin{figure}
  \centering
  \begin{tikzpicture}[scale=2]
    \draw[->] (0, 0) -- (2,0);
    \draw[->, dashed] (0,0) -- (-2,0);
    \draw[->, thick] (0,0) --  (0,1) node [above] {$R_3$};
    \draw[->, thick] (0,0) --  (0.5,1.5) node [right] {$R_2$};
    \draw[->, thick] (0,0) --  (-1,1) node [left] {$R_1$};
    \draw[->, thick] (0,0) --  (-0.5,1.5) node [left] {$R_0$};
    \begin{scope}[on background layer]
      \fill[draw=none, color=white!90!red] (0:2.0) arc (0:108.5:2.0) -- (0,0);
      \fill[draw=none, color=white!90!blue] (108.5:2.0) arc (108.5:180:2.0) -- (0,0);
      \draw (20:1.0) node [color=red] {$\mathbb H_-$};
      \draw (160:1.0) node [color=blue] {$\mathbb H_+$};
    \end{scope}
  \end{tikzpicture}
  \caption{
    \protect
    Consider the $A_3$ quiver with simple roots $\alpha_i$ and simple reflections $s_i$.
    Consider the root sequence for  $w = s_2s_3s_1(\alpha_2)$.
    The central charge chosen for the diagram above maps $R_1$ to $\mathbb H_+$ and $R_2, R_3$  to $\mathbb H_-$.
    By \autoref{prop:unique-spherical-stable-construction}, the stable object of class $w$ is $\sigma^{-1}_2 \sigma^{-1}_3 \sigma_1(P_2)$.
  }\label{fig:rootsequence}
\end{figure}

Since all the roots in the root sequence $R$ of $X$ are positive, they are mapped to the upper half plane $\mathbb H$ by the central charge.
The semi-stability of $X$ depends on their position with respect to $R_0$.
\begin{proposition}\label{prop:unique-spherical-stable-construction}
  With the notation above, the object $X$ defined by \eqref{eqn:lift} is $\tau$-semistable if and only if $Z(R_+) \subset \mathbb H_+$ and $Z(R_-) \subset \mathbb H_-$.
\end{proposition}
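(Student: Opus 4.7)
The plan is to proceed by induction on $n$. The base case $n = 0$ is immediate: $X = P_v$ is a simple of the standard heart and hence $\tau$-stable, while the root sequence consists only of $R_0$, so the conditions on $R_+, R_-$ are vacuous.

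For the inductive step, I would exploit the auto-equivalence perspective. Write $F = \sigma_{v_n}^{\epsilon_n} \circ \cdots \circ \sigma_{v_1}^{\epsilon_1}$, so that $X = F(P_v)$. Then $X$ is $\tau$-semistable if and only if $P_v$ is $\tau'$-semistable, where $\tau' := F^{-1} \cdot \tau$. Since $F$ acts on $K(\calC)$ as $s_{v_n} \cdots s_{v_1}$, the central charge of $\tau'$ on the class $v$ equals $Z(w) = Z(R_0)$, so the half-plane $\mathbb H$ and its division into $\mathbb H_\pm$ are the same whether viewed at $R_0$ under $Z$ or at $v$ under $Z_{\tau'}$. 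The question becomes: when is $P_v$ semistable under $\tau'$?

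I would analyse this by peeling off one twist at a time (say from the innermost), viewing $\tau'$ as built from $\tau$ by successively applying inverse twists. Each inverse twist $\sigma_{v_i}^{-\epsilon_i}$ can be interpreted as a tilt of the heart at the simple $P_{v_i}$, placed at the appropriate phase level dictated by the inductively updated stability condition. The claim to maintain is that, under the central charge hypothesis, at each intermediate stage $P_v$ remains a stable object whose phase is pinned at $\arg Z(R_0)/\pi$, and the twist currently being applied involves a simple $P_{v_i}$ whose phase is on the correct side of $\phi(P_v)$ so that the tilt preserves $P_v$ as a simple of the new heart. The central charge condition $Z(R_i) \in \mathbb H_{\epsilon_i}$ should match up exactly with the required phase inequality at step $i$. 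The improvement propositions of \autoref{sec:impromevent} will provide the technical control that each individual twist behaves as expected on stable objects of the specified phase, and \autoref{prop:unique-spherical-stable} ensures that $P_v$ is the unique stable spherical object of its phase in each intermediate $\tau_i$.

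The main obstacle is the bookkeeping: the root sequence transforms by the reflection $s_{v_i}$ under each peeling step, and one must verify that the half-plane membership of the $R_j$ for $j \neq i$ is preserved in the sense needed to feed the induction forward. The ``only if'' direction is handled by contrapositive: if some $Z(R_i)$ falls in the wrong half plane, then at the corresponding stage the relevant $\HHom$ lies in a degree that makes the defining triangle \eqref{eqn:positive-twist} or \eqref{eqn:negative-twist} produce a destabilising sub- or quotient object, obstructing semistability at that stage and, by the same auto-equivalence argument in reverse, obstructing semistability of $X$ itself.
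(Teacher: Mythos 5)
Your reduction ``$X$ is $\tau$-semistable iff $P_v$ is $F^{-1}\tau$-semistable'' is correct, but the inductive invariant you then propose to carry --- that $P_v$ remains a \emph{stable} object of pinned phase at each intermediate stage, equivalently that every partial twist $X_k = \sigma_{v_k}^{\epsilon_k}\cdots\sigma_{v_1}^{\epsilon_1}(P_v)$ is $\tau$-semistable --- is both unjustified and false in general. The hypothesis of the proposition constrains the full root sequence $R_i = s_{v_n}\cdots s_{v_{i+1}}(v_i)$ relative to $R_0 = w$; it says nothing about the partial root sequences relative to the partial products $s_{v_k}\cdots s_{v_1}(v)$, which is what semistability of $X_k$ would require. (Also note $Z_{F_k^{-1}\tau}(v) = Z(s_{v_k}\cdots s_{v_1}v)$, not $Z(R_0)$, so the phase is not pinned along the way.) A second mismatch: your appeal to \autoref{prop:unique-spherical-stable} and to the uniqueness hypotheses in the improvement propositions of \autoref{sec:impromevent} requires a genericity assumption on $Z$ that the proposition does not make --- it only assumes the $[0,1)$ heart of $\tau$ is standard.

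The missing idea is a \emph{weaker} inductive invariant that actually propagates. The paper keeps the stability condition fixed and shows, by induction on the number of twists, that each intermediate object lies in the standard heart $\heart$ (\autoref{prop:heart} gives the criterion: $\sigma_{P_{v}}^{\mp 1}$ preserves $\heart$ exactly when $P_{v}$ is not a sub, resp.\ quotient) \emph{and} that the partial negative (resp.\ positive) root subsequence ``envelops'' the subs (resp.\ quotients) of that object, meaning every subobject's class is a non-negative combination of those roots and $\pm[X]$; this is the content of the unnamed enveloping lemma and \autoref{prop:hyperplane-heart}, where the hypothesis is transported through the induction by replacing $\lambda$ with $\lambda\circ s_{v_n}$. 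Semistability is only extracted at the very last step, by pairing the enveloping data with a linear functional $\ell\circ Z$ vanishing on $Z(R_0)$. Your sketch gestures at ``bookkeeping'' but supplies no analogue of this control on subobject classes, and without it the tilt-by-tilt argument has nothing to induct on. Finally, your contrapositive for the ``only if'' direction is equally unsubstantiated, though to be fair the paper's own written proof also only establishes the ``if'' direction explicitly.
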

Let $\tau$ be generic.
\autoref{prop:unique-spherical-stable-construction} gives an effective construction of the unique $\tau$-stable object of class $w = R_0$.
Indeed, we compute the root sequence $R$, look at its image in $Z(R) \subset \mathbb H$, and take $\epsilon_i = \pm 1$, depending on whether $Z(R_i)$ lies in $\mathbb H_\pm$.

We need some preparation to prove~\autoref{prop:unique-spherical-stable-construction}, including the definitions and basic properties of spherical twists from the beginning of \autoref{sec:impromevent}.
Let $\heart \subset \mathcal C$ be the standard heart.
Set
\[ K = K(\heart) = K(\mathcal C).\]
Denote by $[X]$, the class in $K$ of an object $X$.

\begin{lemma}\label{prop:heart}
  Let $v$ be a simple root and $X \in \heart$ be any object.
  The twist $\sigma^{-1}_{P_v}X$ lies in $\heart$ if and only if $P_v$ is not a sub-object of $X$.
  Similarly, the twist $\sigma_{P_v}X$ lies in $\heart$ if and only if $P_v$ is not a quotient of $X$.
\end{lemma}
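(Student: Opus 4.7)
The plan is to compute the $\heart$-cohomology of $\sigma^{\pm 1}_{P_v}(X)$ via the long exact sequences attached to the defining triangles of the spherical twists, and to identify exactly when that cohomology is concentrated in degree zero. The two statements are formally dual, so the plan is to treat the inverse twist in detail; the positive-twist case follows by the analogous argument.

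For the first statement, I start from the defining triangle
\[ \sigma^{-1}_{P_v}(X) \to X \to P_v \otimes \HHom(X,P_v)^\vee \xrightarrow{+1}. \]
Since $X, P_v \in \heart$, the group $\Hom^i(X,P_v)$ vanishes for $i < 0$, and the 2-Calabi--Yau property gives $\Hom^2(X,P_v) \cong \Hom^0(P_v,X)^\vee$ with $\Hom^i(X,P_v) = 0$ for $i > 2$. Writing $a, b, c$ for the dimensions of $\Hom^0, \Hom^1, \Hom^2(X,P_v)$, the third term of the triangle becomes $P_v^{\oplus a} \oplus P_v[1]^{\oplus b} \oplus P_v[2]^{\oplus c}$, with $\heart$-cohomology concentrated in degrees $0, -1, -2$. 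Combined with the fact that $X$ has $\heart$-cohomology only in degree $0$, the long exact sequence forces $\sigma^{-1}_{P_v}(X)$ to be supported in $\heart$-degrees $-1, 0, 1$, with $H^{-1} \cong P_v^{\oplus c}$ and $H^1 \cong \coker(f)$, where $f \from X \to P_v^{\oplus a}$ is the coevaluation whose $i$-th projection is a chosen basis element $\phi_i$ of $\Hom^0(X, P_v)$.

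The heart of the argument---and the only non-bookkeeping step---is to show that $f$ is always surjective, so that $H^1 = 0$ automatically. My plan is to argue by contradiction: any nonzero cokernel has a simple quotient, which must be isomorphic to $P_v$ since $P_v$ is simple in $\heart$. Pulling back the surjection would produce a nonzero map $P_v^{\oplus a} \to P_v$, i.e.\ a nonzero tuple $(c_1, \ldots, c_a) \in \k^a$, whose composition with $f$ is $\sum c_i \phi_i = 0$, contradicting the linear independence of the $\phi_i$. Granted this, $\sigma^{-1}_{P_v}(X) \in \heart$ if and only if $c = 0$, i.e.\ $\Hom^0(P_v, X) = 0$. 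Since $P_v$ is simple in $\heart$, any nonzero map $P_v \to X$ is a monomorphism, so this last condition is equivalent to $P_v$ not being a sub-object of $X$.

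For the positive twist I would apply the same method to the triangle
\[ \HHom(P_v, X) \otimes P_v \to X \to \sigma_{P_v}(X) \xrightarrow{+1}, \]
whose first term now has $\heart$-cohomology in degrees $0, 1, 2$. The dual linear-independence argument shows that the evaluation $P_v^{\oplus a'} \to X$ (with $a' = \dim \Hom^0(P_v,X)$) is always injective, killing the potential $H^{-1}$; then $\sigma_{P_v}(X) \in \heart$ reduces to the vanishing of $\Hom^2(P_v, X) \cong \Hom^0(X, P_v)^\vee$, which by simplicity of $P_v$ is equivalent to $P_v$ not being a quotient of $X$.
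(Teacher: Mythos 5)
Your proof is correct, and it rests on the same skeleton as the paper's: the defining triangle of the twist, the $2$-Calabi--Yau property together with $X, P_v \in \heart$ to confine $\Hom^*(X,P_v)$ to degrees $0,1,2$, and the long exact sequence of $\heart$-cohomology. The one place where you genuinely diverge is the crucial unconditional vanishing of the cohomology on the ``wrong'' side (in your indexing, $H^1(\sigma^{-1}_{P_v}X)$). The paper notes that this piece is a quotient of a direct sum of copies of $P_v$, hence by simplicity a direct sum of copies of $P_v$; if nonzero it would give a nonzero map $\sigma^{-1}_{P_v}X \to P_v[-1]$, and applying the equivalence $\sigma_{P_v}$ would produce a nonzero map $X \to P_v[-2]$ between objects of the heart, a contradiction. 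You instead identify this cohomology as $\coker$ of the degree-zero coevaluation $X \to P_v \otimes \Hom^0(X,P_v)^\vee$ and prove surjectivity directly: a simple quotient of the cokernel would have to be $P_v$, yielding a nontrivial vanishing linear combination of a basis of $\Hom^0(X,P_v)$. Both arguments work; the paper's is shorter and uses only that $\sigma_{P_v}$ is an equivalence, while yours computes the $\heart$-cohomology of the twist completely and therefore delivers both directions of the ``if and only if'' in a single pass, whereas the paper treats the converse by a separate one-line argument. Two points you should make explicit if you write this up: the existence of a simple quotient of a nonzero object uses that $\heart$ is a finite-length category (the paper invokes this fact elsewhere), and the identification of the induced map on $H^0$ with $(\phi_1,\dots,\phi_a)$ uses the explicit description of the map in the twist triangle as the coevaluation, which the paper's setup does supply.
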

\begin{proof}
  We prove the first statement; the second is analogous.
  Set $P = P_v$.
  We have the exact triangle
  \begin{equation}\label{eqn:simple-negative-twist}
    P \otimes \HHom(X,P)^\vee[-1] \to \sigma^{-1}_P(X) \to X \xrightarrow{+1}.
  \end{equation}
  Since both $P$ and $X$ lie in $\heart$, the 2-Calabi--Yau property implies that $\HHom^i(X,P)$ is zero for $i < 0$ and $i > 2$.

  Assume that $P$ is not a sub of $X$ in $\heart$.
  Since $P$ is simple in $\mathcal{A}$, we must have $\HHom^0(P,X) = 0$.
  By the 2-Calabi--Yau property, this implies $\HHom^2(X,P) = 0$.

  Let $V = \HHom(X,P)^\vee[-1]$.
  Since $V$ is a complex of vector spaces, it is quasi-isomorphic to a direct sum of shifted copies of $\k$.
  Since $\HHom^i(X,P) = 0$ if $i \notin \{0,1\}$, the complex $V$ must be a direct sum of copies of $\k[j]$ for $j = -1, 0$.
  
  In the exact triangle \eqref{eqn:simple-negative-twist}, the two extreme terms lie in $\mathcal{C}_{\geq -1} \cap \mathcal{C}_{< 1}$.
  As a result, $\sigma^{-1}_P(X)_{< 0}$ also lies in $\mathcal{C}_{\geq -1} \cap \mathcal C_{< 1}$.
  We must show that it lies in $\heart = \mathcal{C}_{\geq 0} \cap \mathcal{C}_{< 1}$.
  That is, we must show that its truncation to $\mathcal{C}_{< 0}$ is zero.

  Note that the truncation $\sigma^{-1}_P(X)_{< 0}$ lies in $\heart[-1]$ and coincides with $H^1(\sigma^{-1}_P(X))[-1]$.
  Since $H^1(X) = 0$, the cohomology long exact sequence applied to the triangle \eqref{eqn:simple-negative-twist} shows that $\sigma^{-1}_P(X)_{< 0}$   is a quotient of a direct sum of copies of $P[-1]$ in $\heart[-1]$.
  Since $P$ is simple in $\heart$, the object $\sigma^{-1}_P(X)_{< 0}$ must itself be a direct sum of copies of $P[-1]$.

  If $\sigma^{-1}_P(X)_{< 0}$ were non-zero, then we would have a non-zero map $\sigma^{-1}_P(X)_{< 0} \to P[-1]$, and hence a non-zero map $\sigma^{-1}_P(X) \to P[-1]$.
  By applying $\sigma_P$, we would then obtain a non-zero map $X \to P[-2]$, which is a contradiction.
  We conclude that  $\sigma_P^{-1}X$ lies in $\heart$.
  
  Conversely, if $P$ is a sub of $X$, then we have a non-zero map $P \to X$ and hence a non-zero map $\sigma^{-1}_P P = P[1] \to \sigma^{-1}_PX$.
  It follows that $\sigma^{-1}_PX$ is not in $\heart$.
\end{proof}

Consider an object $X \in \heart$.
We say that a subset $S \subset K$ \emph{envelops} the subs (resp. quotients) of $X$ if for every sub (resp. quotient) object $Y$ of $X$, the class $[Y]$ can be expressed as non-negative linear combination of the elements of $S$ and $\pm [X]$.
Observe that if $S$ envelops the subs of $X$ then $-S$ envelops the quotients of $X$, and vice-versa.

\begin{lemma}
  Let $X$ be an object of $\heart$ and let $v \in K$ be a simple root.
  Let $S$ be a subset of $K$ and set $S' = s_v(S) \cup \{v\}$.
  \begin{enumerate}
  \item If $S$ envelops the subs of $X$ and $\sigma^{-1}_{P_v}X$ lies in $\heart$, then $S'$ envelops the subs of $\sigma^{-1}_{P_v}X$.
  \item If $S \subset K$ envelops the quotients of $X$ and $\sigma_{P_v}X$ lies in $\heart$, then $S'$ envelops the quotients of $\sigma_{P_v}X$.
  \end{enumerate}
\end{lemma}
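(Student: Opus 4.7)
My plan is to unpack the twist via its defining triangle, decompose a sub-object $W$ of $Y = \sigma_{P_v}^{-1}X$ into controlled pieces using the ambient exact sequences, apply the enveloping hypothesis on $X$, and then track the transformation at the level of classes to produce a non-negative combination in $s_v(S) \cup \{v\}$ together with a free $\pm[Y]$ contribution. I would prove part~(1) in detail and obtain part~(2) by the same strategy applied to the triangle for the positive twist.

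First I set up the basic exact sequences. As in the proof of \autoref{prop:heart}, the hypothesis $Y \in \heart$ (equivalently, $P_v$ is not a sub of $X$) gives $\Hom^i(X, P_v) = 0$ for $i \notin \{0, 1\}$. Setting $h_i = \dim \Hom^i(X, P_v)$, the defining triangle $P_v \otimes \HHom(X, P_v)^\vee[-1] \to Y \to X \xrightarrow{+1}$ yields a four-term exact sequence
\[
  0 \to P_v^{h_1} \to Y \to X \to P_v^{h_0} \to 0
\]
in $\heart$. Letting $K$ denote the image of $Y \to X$, this splits into two short exact sequences $0 \to P_v^{h_1} \to Y \to K \to 0$ and $0 \to K \to X \to P_v^{h_0} \to 0$. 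On classes, $[Y] = [X] + kv$ with $k = h_1 - h_0$.

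Next, for any sub $W \subset Y$ in $\heart$, the intersection $W \cap P_v^{h_1}$ is a sub of $P_v^{h_1}$, hence of the form $P_v^c$ for some $0 \leq c \leq h_1$, and the quotient $W' := W/P_v^c$ embeds as a sub of $K$, hence of $X$. On classes this gives $[W] = cv + [W']$. By the envelopment hypothesis I can write $[W'] = \sum_{s \in S} n_s s + m[X]$ with $n_s \geq 0$ and $m \in \Z$. Setting $t_s := s_v(s) \in s_v(S)$, so that $s = t_s + \alpha_s v$ with $\alpha_s = \langle s, v \rangle$, and substituting $[X] = [Y] - kv$, I would obtain
\[
  [W] = \sum_{s} n_s t_s + \Bigl( c + \sum_{s} n_s \alpha_s - mk \Bigr) v + m[Y].
\]
The $t_s$-coefficients are non-negative and the $[Y]$-coefficient is a free integer, so to conclude it suffices to produce an expression in which the coefficient of $v$ is also non-negative.

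I expect the main obstacle to be precisely this last step, as the naive expression above can have a negative coefficient on $v$. I would handle it by a case analysis. If $W' = 0$, then $[W] = cv$ is immediately a non-negative combination. If $W = Y$, then $[W] = 1 \cdot [Y]$ suffices trivially. For the remaining case $W' \subsetneq K$, I would exploit that $W'$ is specifically a sub of $K$ (and not merely of $X$), which imposes vanishing of the composite $W' \to X \to P_v^{h_0}$, together with the freedom to replace the chosen expression of $[W']$ by a different valid one using relations in $K$ arising from the identity $[Y] = s_v([X])$ and from the dual envelopment of quotients of $X$ by $-S$. A careful bookkeeping using these ingredients should produce the required non-negative expression for $[W]$ in terms of $s_v(S) \cup \{v\}$ and $\pm[Y]$, concluding the proof.
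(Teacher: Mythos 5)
Your setup is fine and you have correctly located the difficulty, but you have not resolved it, and I do not believe your route resolves it. After decomposing a sub $W \subset \sigma_{P_v}^{-1}X$ as an extension of $W' \subset X$ by $P_v^{\oplus c}$ and expanding $[W'] = \sum_s n_s s + m[X]$ term by term via $s = s_v(s) + \langle s, v\rangle v$, you arrive at a coefficient $c + \sum_s n_s \alpha_s - mk$ on $v$ whose sign is genuinely uncontrolled: the $\alpha_s = \langle s, v\rangle$ are typically negative for roots adjacent to $v$, and $m$ is an arbitrary integer. Your proposed repair --- ``relations in $K$'' and ``a different valid expression'' --- cannot work as stated, because $K$ is a free abelian group, so there are no relations to exploit; what you would actually need is a different \emph{enveloping} expression for $[W']$, and you give no mechanism for producing one. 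The final paragraph is therefore a statement of hope rather than a proof, and the case analysis ($W'=0$, $W=Y$, ``otherwise'') isolates only the trivial cases while leaving the entire content in the unresolved branch.

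The missing idea is to act on the sub-object itself rather than on its class. The paper's argument runs as follows: if $P_v$ is not a quotient of $W$, apply $\sigma_{P_v}$ to the short exact sequence $0 \to W \to \sigma_{P_v}^{-1}X \to Q \to 0$; by \autoref{prop:heart} all three twisted terms land in $\heart$, so $\sigma_{P_v}W$ is a genuine sub-object of $X$ in $\heart$. The hypothesis then gives $[\sigma_{P_v}W] = s_v[W]$ as a non-negative combination of $S$ and $\pm[X]$, and applying the involution $s_v$ to this \emph{entire} expression at once yields $[W]$ as a non-negative combination of $s_v(S)$ and $\pm[\sigma_{P_v}^{-1}X]$ with the \emph{same} coefficients --- no term-by-term substitution, hence no spurious $v$-coefficient. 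The case where $P_v$ is a quotient of $W$ is handled by first stripping off the maximal $P_v$-quotient, $0 \to W' \to W \to P_v^{\oplus n} \to 0$, contributing only $+nv$, which is where the extra generator $v$ in $S'$ enters. I suggest you rework your argument along these lines; your intersection with the kernel $P_v^{\oplus h_1}$ strips $P_v$ off the wrong side and leads to the coefficient problem you identified.
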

\begin{proof}
  We prove the first assertion; the second is similar.

  Set $P = P_v$.
  Let $Y$ be any sub of $\sigma^{-1}_{P}X$.
  We must prove that $[Y]$ is a non-negative linear combination of the elements of $S'$ and $\pm[\sigma_P^{-1}(X)]$.

  First suppose $P$ is not a quotient of $Y$.
  Set
  \[ Q = \coker(Y \to \sigma^{-1}_PX).\]
  Since $\sigma_P\sigma^{-1}_PX$ lies in $\heart$, by \autoref{prop:heart}, $P$ is not a quotient of $\sigma^{-1}_PX$.
  Therefore $P$ is not a quotient of $Q$.
  By applying $\sigma_P$ to the exact sequence
  \[ 0 \to Y \to \sigma^{-1}_PX \to Q \to 0,\]
  we get an exact triangle
  \[ \sigma_P Y \to X \to \sigma_PQ \xrightarrow{+1},\]
  whose terms are in $\heart$ by \autoref{prop:heart}.
  Therefore, it is an exact sequence in $\heart$.
  Since $S$ envelops the subs of $X$, the class $[\sigma_P Y] = s_v[Y]$ is a non-negative linear combination of elements of $S$ and $\pm [X]$.
  Equivalently, the class $[Y]$ is a non-negative linear combination of the elements of $s_v(S)$ and $\pm [\sigma_P^{-1}X]$.

  It remains to treat the case when $P$ is a quotient of $Y$.
  Define $Y' \subset Y$ be such that we have an exact sequence
  \[ 0 \to Y' \to Y \to P^{\oplus n} \to 0\]
  for some $n$ and $P$ is not a quotient of $Y$ (such a $Y'$ exists because $\heart$ is a finite-length category).
  By the argument above, $[Y']$ is a non-negative linear combination of the elements of $s_v(S)$ and $\pm [\sigma_P^{-1}X]$.
  But then $[Y]$ is a non-negative linear combination of the elements of $s_v(S) \cup \{v\}$ and $\pm [\sigma_P^{-1}X]$, as desired.  
\end{proof}

Consider an object $X$ of $\mathcal C$ defined as in \eqref{eqn:lift}:
\[
  X =  \sigma^{\epsilon_n}_{v_{n}} \circ \dots \circ \sigma_{v_{1}}^{\epsilon_1} (P_{v}),
\]
and the root sequence $R_i$, divided into a positive sub-sequence $R_+$, a negative sub-sequence $R-$, and the neutral root $R_0$.
\begin{lemma}\label{prop:hyperplane-heart}
  In the above setup, suppose there exists a linear functional $\lambda \from K(\mathcal C) \to {\mathbb R}$ such that $\lambda(R_0) = 0$, and $\lambda(R_+) \subset \mathbb R_{> 0}$, and $\lambda(R_-) \subset \mathbb R_{< 0}$.
  Then $X$ lies in the heart $\heart$.
  Furthermore, the set $R_-$ (resp. $R_+$) envelops the subs (resp. quotients) of $X$.
\end{lemma}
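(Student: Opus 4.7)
The plan is to induct on the length $n$ of the twist expression. For the base case $n = 0$, $X = P_v$ is simple in $\heart$ and both $R_\pm = \emptyset$ trivially envelope the subs and quotients of $X$ (which consist only of $0$ and $X$ itself).

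For the inductive step, write $X = \sigma^{\epsilon_n}_{v_n}(X')$ with $X' = \sigma^{\epsilon_{n-1}}_{v_{n-1}} \circ \cdots \circ \sigma^{\epsilon_1}_{v_1}(P_v)$; its root sequence satisfies $R_i' = s_{v_n}(R_i)$ for $0 \le i \le n-1$. Transport the linear functional via $\lambda' = \lambda \circ s_{v_n}$: a direct check shows $\lambda'(R_0') = \lambda(R_0) = 0$ and $\lambda'(R_i') = \lambda(R_i)$ for $1 \le i \le n-1$, so $\lambda'$ satisfies the lemma's hypotheses for $X'$ with the partition into $R'_\pm$ induced by $\epsilon_1, \ldots, \epsilon_{n-1}$. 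By the inductive hypothesis, $X' \in \heart$, with $R'_-$ enveloping the subs and $R'_+$ the quotients of $X'$.

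Next, $X \in \heart$ follows from \autoref{prop:heart}: assume $\epsilon_n = -1$ (the case $\epsilon_n = +1$ is dual); then we must rule out $P_{v_n}$ being a sub of $X'$. If $P_{v_n} \hookrightarrow X'$ in $\heart$, then $v_n = [P_{v_n}]$ sits in the enveloping cone of $R'_-$ together with $\pm [X']$, so $\lambda'(v_n) \le 0$. On the other hand, $\lambda'(v_n) = \lambda(s_{v_n}(v_n)) = \lambda(-v_n) = -\lambda(R_n)$; since $R_n \in R_-$ has $\lambda(R_n) < 0$, this yields $\lambda'(v_n) > 0$, a contradiction.

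Finally, propagate the envelopes from $X'$ to $X$ via the preceding lemma. When $\epsilon_n = -1$, part (1) of that lemma gives $R_- = s_{v_n}(R'_-) \cup \{v_n\}$ enveloping the subs of $X$; for the quotients, a parallel argument dual to part (1) (transferring a quotient of $\sigma^{-1}_{P_{v_n}}(X')$ back to a quotient of $X'$ via $\sigma_{P_{v_n}}$, after first splitting off any maximal $P_{v_n}$-sub from the quotient) yields $R_+ = s_{v_n}(R'_+)$ enveloping the quotients. The case $\epsilon_n = +1$ is completely analogous, using part (2) and its dual. The main obstacle is setting up these two "off-diagonal" propagation statements not directly covered by the preceding lemma, and verifying that the simple root $v_n$ gets adjoined to the sub (resp. quotient) envelope precisely when $\epsilon_n = -1$ (resp. $+1$), which is exactly what matches the recursive build-up of $R_\pm$ to the partition of the root sequence by the signs $\epsilon_i$.
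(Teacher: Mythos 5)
Your proposal follows the paper's proof essentially step for step: the same induction on $n$, the same transported functional $\lambda' = \lambda \circ s_{v_n}$, the same use of $\lambda'$ to rule out $P_{v_n}$ being a sub (resp.\ quotient) of $X'$ via the induction hypothesis, and the same appeal to \autoref{prop:heart} and the enveloping lemma to place $X$ in $\heart$ and update the envelopes. The one place you diverge is the step you yourself single out as ``the main obstacle,'' the off-diagonal propagation (e.g.\ the quotient envelope when $\epsilon_n = -1$); the paper simply does not address this, disposing of it with ``the proof when $\epsilon_n = +1$ is similar,'' so you are more candid than the source. However, your sketched fix targets the wrong object: for a short exact sequence $0 \to Y \to X \to Q \to 0$ with $X = \sigma^{-1}_{P_{v_n}}(X')$, the obstruction to applying $\sigma_{P_{v_n}}$ termwise is $P_{v_n}$ occurring as a \emph{quotient of the kernel $Y$} (the condition in \autoref{prop:heart}), not as a sub of $Q$; splitting off a maximal $P_{v_n}$-quotient $Y \twoheadrightarrow P_{v_n}^{\oplus m}$ changes $[Q]$ by $-m v_n$, so this route yields $s_{v_n}(R'_+) \cup \{-v_n\}$ rather than $R_+ = s_{v_n}(R'_+)$ enveloping the quotients. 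This is harmless downstream --- the functional $\lambda$ is positive on $-R_-$, so the augmented envelopes $R_- \cup (-R_+)$ and $R_+ \cup (-R_-)$ still support both the inductive non-sub/non-quotient checks and the semistability argument in \autoref{prop:unique-spherical-stable-construction} --- but as written your dual propagation statement does not close up to the set $R_+$ claimed in the lemma, and you should either carry the augmented envelopes through the induction or show the extra $\{-v_n\}$ can be removed.
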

\begin{proof}
  We induct on $n$.
  If $n = 0$, then $X = P_v$ is simple, both $R_+$ and $R_-$ are empty, and the statement holds.

  Assume the statement for $(n-1)$.
  Let
  \[ X' = \sigma^{\epsilon_{n-1}}_{v_{n-1}} \circ \cdots \circ \sigma_{v_1}^{\epsilon_1} (P_v),\]
  and let $R'$ denote the root sequence for $X'$.
  Then we have $R'_i = s_{v_n} R_i$ for $i = 0, \dots, n-1$.
  Note that
  \[\lambda' = \lambda \circ s_{v_n} \from K(\mathcal C) \to \mathbb R\]
  is a linear functional that vanishes on the neutral root $R'_0$ for $X'$ and takes opposite signs on the positive and the negative sub-sequences $R'_+$ and $R'_-$.
  By the induction hypothesis, $X'$ lies in the heart $\heart$ and its subs (resp. quotients) are enveloped by $R'_-$ (resp. $R'_+$).

  Observe that $R_n = v_n$.
  Suppose $\epsilon_n = -1$.
  Then $R_n \in R_-$.
  Since $\lambda$ is negative on $R_-$, it is positive on $-R_n = s_{v_n}(R_n)$.
  By construction, $\lambda'$ is negative on $R'_-$, positive on $R_n$, and zero on $[X']$.
  So $R_n = [P_{v_n}]$ cannot be a positive linear combination of elements of $R'_-$ together with $\pm[X']$.
  
  Since $R'_-$ envelops the subs of $X'$, we conclude that $P_{v_n}$ is not a sub of $X'$.
  Hence, by \autoref{prop:heart}, $X$ is in the heart and its subs are enveloped by $s_1(R'_s) \cup \{v_n\} = R_-$.
  The proof when $\epsilon_n = +1$ is similar.  
\end{proof}

We now have the tools to finish the proof of \autoref{prop:unique-spherical-stable-construction}.
\begin{proof}[Proof of \autoref{prop:unique-spherical-stable-construction}]
Once again, set
\[
  X =  \sigma^{\epsilon_n}_{v_{n}} \circ \dots \circ \sigma_{v_{1}}^{\epsilon_1} (P_{v}),
\]
with $R_+$ and $R_-$ defined as above.
Suppose $Z$ maps $R_+$ and $R_-$ to $\mathbb H_+$ and $\mathbb H_-$, respectively.
Choose a linear functional $\ell \from \mathbb C \to \mathbb R$ that vanishes on $\alpha = Z(R_0)$ and takes positive (resp. negative) values on $\mathbb H_+$ (resp. $\mathbb H_-$).
Set $\lambda = \ell \circ Z$.
Then $\lambda$ satisfies the hypotheses of \autoref{prop:hyperplane-heart}.
As a result, $X$ is in the heart $\heart$.

To show that $X$ is semi-stable, consider a sub $Y \subset X$.
But $R_-$ envelops the subs of $X$, that is, $[Y]$ is a non-negative linear combination of elements of $R_-$ and $\pm [X]$.
By applying $Z$, we obtain that $Z(Y)$ is a non-negative linear combination of elements of $Z(R_-)$ and $\pm \alpha$.
Note that $Z(R_-) \subset \mathbb H_-$.
Since we know that $Z(Y) \subset \mathbb H$, we conclude that $Z(Y)$ lies in $\mathbb H_- \cup \mathbb R_{> 0} \cdot \alpha$.
As a result, we have $\arg Y \leq \arg X$.
Since this is true for any sub $Y \subset X$, we conclude that $X$ is semi-stable.
\end{proof}

\section{Applications}\label{sec:applications}
In this section, we reap the benefits of the results proved in the previous sections.

Fix the following notation:
\begin{itemize}
\item[$\mathcal C$] a $\k$-linear 2-Calabi--Yau triangulated category,
\item[$\tau$] a stability condition on $\mathcal C$,
\item[$\Phi$] the subset of $\mathbb R$ consisting of the phases of $\tau$-stable spherical objects
\item[$G$] the group of auto-equivalences of $\mathcal C$ generated by the twists in $\tau$-stable spherical objects.
\end{itemize}

\begin{proposition}\label{prop:one-orbit}
  With the notation introduced at the beginning of \autoref{sec:applications}, assume that $\tau$ admits at most one spherical stable object of every phase and $\Phi \subset \mathbb R$ is discrete.
  Then every spherical object in $\mathcal C$ is in the $G$-orbit of a $\tau$-stable spherical object.
\end{proposition}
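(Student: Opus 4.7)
Given a spherical object $Y$, the plan is to build an element $g \in G$ with $g(Y)$ being $\tau$-stable, by iteratively twisting $Y$ by the inverse of the spherical twist in its bottom HN factor. Set $Y_0 = Y$. Given $Y_n$, if it is $\tau$-stable, stop; otherwise put $X_n = Y_n^{\bot}$ and $Y_{n+1} = \sigma_{X_n}^{-1}(Y_n)$.

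Since spherical twists preserve sphericalness, each $Y_n$ is spherical, and by \cite[Cor.~2.3]{huy:12} its Harder--Narasimhan factors are spherical. Thus $X_n$ is a $\tau$-stable spherical object, hence by the uniqueness hypothesis the unique spherical stable object of its phase, so $\sigma_{X_n}^{\pm 1} \in G$. Invoking the remark following \autoref{prop:improvement3}, \autoref{prop:improvement1} gives $\phi^-(Y_{n+1}) > \phi^-(Y_n)$. To control the top phase we split on the spread. If $\phi^+(Y_n) - \phi^-(Y_n) \geq 1$, then $\Hom^i(Y_n, Y_n) = 0$ for $i < 0$ by sphericity, and \autoref{prop:improvement2} gives $\phi^+(Y_{n+1}) \leq \phi^+(Y_n)$. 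Otherwise, $\Hom^0(Y_n, Y_n) = \k$ so $Y_n$ is indecomposable, and since the procedure did not stop at stage $n$ we have $Y_n \neq X_n$, so $X_n$ is not a direct summand of $Y_n$, and \autoref{prop:improvement3} applies instead.

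The sequence $\phi^-(Y_n)$ is strictly increasing in $\Phi$ and bounded above by $\phi^+(Y_0)$. Since $\Phi$ is discrete, only finitely many of its elements lie in $[\phi^-(Y_0), \phi^+(Y_0)]$, so the construction terminates after some $N$ steps at a $\tau$-semistable spherical object $Y_N$. Its Jordan--H\"older factors in $\mathcal{P}(\phi^-(Y_N))$ are simple and (again by Huybrechts) spherical, hence all isomorphic to a single $X$ by uniqueness. Because $\Hom^1(X, X) = 0$ for a spherical $X$, the object $Y_N$ splits as a direct sum of copies of $X$, and $\Hom^0(Y_N, Y_N) = \k$ then forces $Y_N \cong X$, which is $\tau$-stable. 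Unwinding, $Y = \sigma_{X_0} \circ \cdots \circ \sigma_{X_{N-1}}(Y_N)$ lies in the $G$-orbit of the $\tau$-stable spherical object $Y_N$.

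The main subtlety is the step-wise verification of the hypotheses of \autoref{prop:improvement2} and \autoref{prop:improvement3}; in the small-spread case this requires knowing $X_n$ is not a direct summand of $Y_n$, which we get from indecomposability of spherical $Y_n$ together with $Y_n \neq X_n$. Once the inductive step is secured, termination from the discreteness of $\Phi$ in a bounded interval and the identification of the terminal $Y_N$ as stable are both immediate.
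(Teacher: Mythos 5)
Your proof is correct and takes essentially the same approach as the paper's: repeatedly apply the inverse twist in the unique $\tau$-stable spherical object at the bottom phase, invoke \autoref{prop:improvement1} together with \autoref{prop:improvement2} or \autoref{prop:improvement3} (via the remark weakening the uniqueness hypothesis) to improve the spread, and terminate using discreteness of $\Phi$. The only differences are presentational --- you run an explicit iteration with a monotone bottom phase rather than inducting on the spread, and you usefully spell out why the terminal semistable spherical object (and each bottom HN factor) is actually stable, a point the paper leaves implicit.
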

\begin{proof}
  Let $Y$ be any spherical object of $\calC$.
  We denote by $|Y|$ the \emph{spread} of $Y$, which is the quantity
  \[ |Y| = \phi^+(Y) - \phi^-(Y).\]
  Note that since the stable factors of $Y$ must be spherical \cite[Corollary~2.3]{huy:12}, both $\phi^+(Y)$ and $\phi^-(Y)$ lie in $\Phi$, and hence their difference lies in the set $\{a-b \mid a, b \in \Phi, a \geq b\}$, which is a discrete subset of $\mathbb{R}$.

  We induct on $|Y|$.
  If $|Y| = 0$, then $Y$ is $\tau$-stable, and we are done.

  Otherwise, let $X$ be the unique $\tau$-stable spherical object of phase $\phi^-(Y)$.
  Since $Y$ is spherical and not stable, $X$ is not a direct summand of $Y$.
  Consider $Y' = \sigma^{-1}_X Y$.
  By \autoref{prop:improvement1} along with either \autoref{prop:improvement2} or \autoref{prop:improvement3}, we have  $|Y'| < |Y|$.
  By the induction hypothesis, $Y'$ lies in the $G$ orbit of $P$, and hence so does $Y$.
  
  Note that the same argument works with $Y' = \sigma_Z Y$ where $Z$ is the unique $\tau$-stable spherical object of phase $\phi^+(Y)$.
\end{proof}

Now suppose that $\Gamma$ is a finite quiver.
Recall that the braid group of $\Gamma$ acts on the associated 2-Calabi--Yau category via spherical twists.
\begin{corollary}[\autoref{thm:spherical}]\label{cor:spherical}
  Let $\mathcal C$ be the 2-Calabi--Yau category associated to a quiver of finite (ADE) type.
  Then every spherical object of $\mathcal C$ is in the braid group orbit of a simple object of the standard heart.
\end{corollary}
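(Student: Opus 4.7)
The plan is to apply \autoref{prop:one-orbit} to a generic standard stability condition and then use \autoref{prop:unique-spherical-stable-construction} to upgrade the orbit group $G$ to the braid group $B_\Gamma$. First, I choose a generic standard stability condition $\tau$ on $\calC_\Gamma$ in the sense of \autoref{prop:unique-spherical-stable}: the central charge $Z$ sends distinct real roots to complex numbers of distinct argument. Since the ADE root lattice contains only finitely many real roots, such a $\tau$ exists, as genericity is an open dense condition.

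Next I verify the hypotheses of \autoref{prop:one-orbit}. Uniqueness of the $\tau$-stable spherical object of each phase is immediate from \autoref{prop:unique-spherical-stable}. For discreteness of $\Phi$, every $\tau$-stable spherical object has a real root class, and phases of $\tau$-stable spherical objects sharing a fixed class differ by integers. Hence $\Phi$ is a finite union of $\bbZ$-cosets, one for each real root class, and therefore discrete in $\bbR$. Invoking \autoref{prop:one-orbit} then shows that every spherical object of $\calC_\Gamma$ lies in the $G$-orbit of some $\tau$-stable spherical object, where $G$ is generated by twists in $\tau$-stable spherical objects.

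It remains to show $G \subseteq B_\Gamma$ and that every $\tau$-stable spherical object lies in the $B_\Gamma$-orbit of some simple $P_v$. Let $X$ be $\tau$-stable and spherical. Since $\sigma_{P_v}(P_v) = P_v[-1]$, the shift $P_v[k]$ already lies in $B_\Gamma \cdot P_v$ for every integer $k$; so after replacing $X$ by an appropriate shift I may assume $[X]$ is a positive real root $w$. Choose a minimal expression $w = s_{v_n}\cdots s_{v_1}v$ and let $\epsilon_i \in \{\pm 1\}$ be the signs prescribed by \autoref{prop:unique-spherical-stable-construction} using the positions of $Z(R_i)$ relative to $Z(R_0)$. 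Setting $\beta = \sigma_{v_n}^{\epsilon_n}\cdots \sigma_{v_1}^{\epsilon_1} \in B_\Gamma$, the object $\beta(P_v)$ is $\tau$-semistable and spherical of class $w$, hence $\tau$-stable by \autoref{prop:unique-spherical-stable}. Uniqueness in \autoref{prop:unique-spherical-stable} forces $X \cong \beta(P_v)$, so $\sigma_X = \beta\sigma_{P_v}\beta^{-1} \in B_\Gamma$. This simultaneously places $X$ in $B_\Gamma \cdot \{P_v\}$ and shows $G \subseteq B_\Gamma$, which together with the previous paragraph proves the corollary.

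The main conceptual input of the whole argument has already been isolated in the earlier sections: the phase-improvement machinery behind \autoref{prop:one-orbit} and the explicit lift of root expressions in \autoref{prop:unique-spherical-stable-construction}. The only place where the ADE hypothesis enters essentially is the discreteness of $\Phi$, needed to guarantee termination of the induction on spread in the proof of \autoref{prop:one-orbit}.
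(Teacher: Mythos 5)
Your proposal is correct and follows essentially the same route as the paper: pick a generic standard stability condition, verify the hypotheses of \autoref{prop:one-orbit} via \autoref{prop:unique-spherical-stable} and the finiteness of roots, and then use \autoref{prop:unique-spherical-stable-construction} to identify the twist group $G$ with a subgroup of the braid group image and to place each $\tau$-stable spherical object in the braid orbit of a simple. Your write-up is in fact slightly more explicit than the paper's (e.g.\ reducing to a positive root by shifting and invoking uniqueness to identify $X$ with $\beta(P_v)$ up to shift), but the argument is the same.
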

\begin{proof}
  Choose a stability condition $\tau$ on $\calC$ with the standard heart and generic central charge $Z \from K(\mathcal C) \to \mathbb C$.
  In particular, assume that $Z$ maps distinct roots to complex numbers of distinct arguments.
  Then, by \autoref{prop:unique-spherical-stable}, there is at most one spherical stable object of every phase.
  
  Let $\Phi \subset \mathbb R$ be the set of phases of spherical stable objects.
  Since the class of a spherical object in $K(\mathcal C)$ is a root, of which there are only finitely many, the set $\Phi$ consists of integer translates of a finite set.
  In particular, $\Phi$ is discrete.

  The simple objects of the standard heart are $\tau$-stable and spherical.
  So the image of the braid group lies in $G$.
  It is not hard to see that the image is, in fact, $G$.
  To see this, let $X$ be a $\tau$-stable spherical object.
  We must show that $\sigma_X$ lies in the image of the braid group.
  From \autoref{prop:unique-spherical-stable-construction}, we know that $X = \beta Y$, where $Y$ is a simple object in the standard heart, and $\beta$ is in the image of the braid group.
  Then
  \[\sigma_X = \beta \sigma_Y \beta^{-1}\]
  is also in the image of the braid group.

  We now apply \autoref{prop:one-orbit} and conclude that every spherical object is in the braid group orbit of a $\tau$-stable spherical object.
  But we already know that every $\tau$-stable spherical object is in the braid group orbit of a simple object of the standard heart.
  The proof is now complete.
\end{proof}

\begin{remark}[Choice of writing]
  Note that in \autoref{prop:one-orbit}, we have a choice of applying a positive or a negative twist, leading to different expressions for a given spherical object as a braid image of a simple object.
  It is an interesting problem to understand these different expressions.
\end{remark}

\begin{proposition}\label{prop:onestab}
  With the notation introduced at the beginning of \autoref{sec:applications}, assume that $\tau$ admits at most one spherical stable object of every phase and $\Phi \subset \mathbb R$ is discrete.
  Let $Y$ be a direct sum of spherical objects of $\mathcal C$ such that $\Hom^i(Y, Y) = 0$ for $i < 0$.
  Then there exists a stability condition $\omega$ in the $G$-orbit of $\tau$ such that $Y$ lies in the $[\alpha, \alpha+1)$-heart of $\omega$ for some $\alpha$.
\end{proposition}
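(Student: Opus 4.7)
The plan is to reduce $Y$ to an object of $\tau$-phase spread less than $1$ by applying a sequence of twists in $\tau$-stable spherical objects, paralleling the argument in \autoref{prop:one-orbit}. Since the $G$-action on $\Stab(\mathcal{C})$ translates hearts through the auto-equivalences, finding $g \in G$ such that the transported object $g^{-1}(Y)$ has $\tau$-spread less than $1$ is equivalent to exhibiting $Y$ inside the $[\alpha, \alpha+1)$-heart of $\omega = g \cdot \tau$, with $\alpha = \phi^-_\tau(g^{-1}(Y))$.

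I induct on the spread $|Y| = \phi^+_\tau(Y) - \phi^-_\tau(Y)$. As in the proof of \autoref{prop:one-orbit}, the stable factors of $Y$ and of any spherical twist of $Y$ are spherical by \cite[Corollary~2.3]{huy:12}, so both endpoints lie in $\Phi$ and the set of achievable spreads is a discrete subset of $\mathbb{R}$. If $|Y| < 1$, then $Y$ already lies in $\mathcal{P}[\phi^-_\tau(Y), \phi^-_\tau(Y)+1)$ and we take $g = \id$. Otherwise, let $X$ be the unique $\tau$-stable spherical object of phase $\phi^-_\tau(Y)$, which exists by hypothesis: the bottom HN factor $Y^\bot$ has spherical stable Jordan--H\"older factors of a single phase, and by the uniqueness assumption they are all isomorphic to $X$ (the self-extensions split by the spherical property $\Hom^1(X,X) = 0$, so in fact $Y^\bot \cong X^{\oplus n}$). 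I apply $\sigma_X^{-1}$ to $Y$: by \autoref{prop:improvement1}(1), invoked via the remark after \autoref{prop:improvement3} so that uniqueness among \emph{spherical} stable objects of the given phase suffices, we obtain $\phi^-_\tau(\sigma_X^{-1}Y) > \phi^-_\tau(Y)$; and because $|Y| \geq 1$ and $\Hom^i(Y,Y) = 0$ for $i < 0$, \autoref{prop:improvement2}(1) gives $\phi^+_\tau(\sigma_X^{-1}Y) \leq \phi^+_\tau(Y)$. Together these yield $|\sigma_X^{-1}Y| < |Y|$.

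Because $\sigma_X^{-1}$ is a triangulated auto-equivalence, the object $\sigma_X^{-1}(Y)$ remains a direct sum of spherical objects and still satisfies $\Hom^i(\sigma_X^{-1}Y, \sigma_X^{-1}Y) = 0$ for $i < 0$, so the induction hypothesis applies and produces $g' \in G$ with $|(g')^{-1}(\sigma_X^{-1}Y)|_\tau < 1$. Setting $g = \sigma_X \circ g' \in G$ closes the induction, which terminates by discreteness of the set of possible spreads.

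The main obstacle I anticipate is the bookkeeping of hypotheses across iterations: verifying that each improvement proposition remains applicable after every twist. The spherical-sum and negative-Hom-vanishing properties propagate freely through the auto-equivalence $\sigma_X^{-1}$, while the bound $|Y| \geq 1$ required by \autoref{prop:improvement2} is precisely the inductive case; this avoids appealing to \autoref{prop:improvement3}, whose additional hypothesis $\Hom^0(Y,Y) = \k$ would generally fail for a direct sum of spherical objects.
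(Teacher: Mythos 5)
Your proposal is correct and follows essentially the same route as the paper: induct on the discrete-valued spread, twist by the unique spherical stable object at the bottom phase, and invoke \autoref{prop:improvement1} together with \autoref{prop:improvement2} to strictly decrease the spread, with the identity $|Y|_{\sigma_X\tau} = |\sigma_X^{-1}Y|_\tau$ translating between acting on the object and acting on the stability condition. Your extra checks (existence of $X$ via the spherical HN factors, appeal to the remark weakening the uniqueness hypothesis to spherical stables, and propagation of the hypotheses on $Y$ under the twist) are all sound and in fact make explicit some points the paper leaves implicit.
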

\begin{proof}
  We induct on the spread $|Y| = |Y|_\tau$.
  Note that this quantity lies in the discrete set $\{a-b \mid a, b \in \Phi\}$.

  If $|Y|_\tau < 1$, then we are done.
  Simply take $\omega = \tau$ and $\alpha = \phi^-(Y)$.

  Suppose $|Y|_\tau \geq 1$.
  Let $X$ be the unique spherical $\tau$-stable object of phase $\phi^-(Y)$.
  Let $\tau' = \sigma_X \tau$.
  Note that the group $G$ and the set $\Phi$ are unchanged if we replace $\tau$ by $\tau'$.
  By \autoref{prop:improvement1} and \autoref{prop:improvement2}, we have
  \[ |Y|_{\tau'} = |\sigma_X^{-1}Y|_\tau < |Y|_\tau.\]
  We conclude the result by the induction hypothesis.
\end{proof}

\begin{corollary}[\autoref{thm:stability}]\label{cor:stability}
  Let $\mathcal C$ be the 2-Calabi--Yau category associated to a quiver of finite (ADE) type.
  Then the stability manifold of $\mathcal C$ is connected.
  Furthermore, up to rotation, every stability condition is in the braid group orbit of a standard stability condition.
\end{corollary}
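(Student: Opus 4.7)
The plan is to apply \autoref{prop:onestab} to the object $Y = \bigoplus_v P_v$, where $v$ runs over the simple roots of $\Gamma$. This $Y$ is a direct sum of spherical objects lying in the standard heart $\heart$, so $\Hom^i(Y,Y) = 0$ for all $i < 0$. First I would verify the hypotheses of \autoref{prop:onestab} for the given $\tau$. The set $\Phi$ is discrete because in finite ADE type the classes of $\tau$-stable spherical objects lie among the finitely many real roots of $\Gamma$, so $\Phi$ is contained in a finite union of integer cosets in $\mathbb R$. The uniqueness of a $\tau$-stable spherical object of each phase follows from \autoref{prop:unique-spherical-stable} when $\tau$ is generic; for non-generic $\tau$ one first perturbs slightly to a nearby generic $\tau'$ and argues by taking a limit.

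Applying \autoref{prop:onestab} then produces $g \in G$ and $\alpha \in \mathbb R$ such that every $P_v$ lies in the $[\alpha,\alpha+1)$-heart $\mathcal A_\omega$ of $\omega = g\tau$. Since $\heart$ is by definition the extension closure of the objects $P_v$, and $\mathcal A_\omega$ is a heart and therefore closed under extensions, $\heart \subseteq \mathcal A_\omega$. Two hearts of bounded $t$-structures on the same triangulated category with one containing the other must coincide, so this inclusion is an equality. Hence the $[\alpha,\alpha+1)$-heart of $\omega$ is precisely $\heart$, which means $\omega$ is obtained from a standard stability condition by the rotation corresponding to $\alpha$; equivalently, $\tau = g^{-1}\omega$ lies in the braid-group orbit of a standard stability condition, up to rotation.

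For the connectedness of $\Stab(\mathcal C_\Gamma)$, let $\calV$ denote the locus of standard stability conditions. The assignment $\tau \mapsto (Z(P_v))_v$ identifies $\calV$ with $\mathbb H^n$, where $n$ is the number of simple roots, so $\calV$ is open and connected. Each rotation-and-braid translate of $\calV$ is therefore also connected, and by the orbit statement just proved these translates cover $\Stab(\mathcal C_\Gamma)$. The main obstacle is to verify that this cover assembles into a connected set rather than a disjoint union of pieces. I expect to handle this by checking that neighboring translates overlap: along the codimension-one boundary component of $\overline{\calV}$ where some $Z(P_v)$ becomes real and negative, the standard heart admits an HRS tilt at $P_v$, and the tilted heart coincides with $\sigma_v^{-1}\heart$. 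Hence a neighborhood of that boundary wall lies in both $\calV$ and $\sigma_v \cdot \calV$, and iterating this observation chains together the full braid-and-rotation orbit into a single connected piece.
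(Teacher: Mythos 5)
Your proof of the orbit statement is essentially the paper's: apply \autoref{prop:onestab} to $Y=\bigoplus_v P_v$ and then use that a heart of a bounded $t$-structure containing the standard heart must equal it. Two points are elided, though. First, \autoref{prop:onestab} only produces $g\in G$, the group generated by twists in $\tau$-stable spherical objects; to conclude that $\tau$ is in the \emph{braid group} orbit of a standard stability condition you still need $G$ to lie in the image of the braid group. The paper gets this from \autoref{cor:spherical}: every $\tau$-stable spherical object $X$ equals $\beta P_v$ for some braid $\beta$, whence $\sigma_X=\beta\sigma_{P_v}\beta^{-1}$. It is a short step, but it is not automatic and you should include it. Second, your treatment of non-generic $\tau$ (``perturb and take a limit'') is too vague: a limit of braid group elements does not obviously make sense. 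The paper's route is to note that the perturbed $\tau'$ lies in the same connected component as $\tau$, prove the orbit statement for $\tau'$, and then transfer it back to $\tau$ using the fact that the whole component $\Stab_0$ consists, up to rotation, of braid translates of standard stability conditions.

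For connectedness the paper simply quotes \cite{ike:14}: the braid group preserves the component $\Stab_0$ containing the standard stability conditions, and every point of $\Stab_0$ is, up to rotation, a braid translate of a standard one. You instead sketch a direct wall-crossing proof: the translates $\beta R_\theta\calV$ cover $\Stab(\mathcal C)$, and adjacent translates overlap near the wall where some $Z(P_v)$ becomes real because the HRS tilt of $\heart$ at $P_v$ is $\sigma_v^{\pm1}\heart$. This is a legitimate and more self-contained route, but the assertion that the simple tilt of the standard heart at $P_v$ equals $\sigma_v^{\pm1}\heart$ is exactly the nontrivial content that the paper outsources to \cite{ike:14}; as written you have asserted it, not proved it, so that is where the real work of your connectedness argument lives. (Also, $\calV\cong\mathbb H^n$ with $\mathbb H$ half-open is connected but not open in $\Stab(\mathcal C)$, so you should check that the overlap you exhibit is an overlap of the actual sets rather than of their closures.)
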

\begin{proof}
  Let $\tau$ be an arbitrary stability condition on $\mathcal C$.
  Let $Z \from K(\calC) \to \mathbb C$ be its central charge.
  Perturb $\tau$ so that $Z$ maps distinct roots to complex numbers of distinct arguments.
  Note that the perturbed $\tau$ lies in the same connected component of the stability manifold as the original $\tau$.
  There is now a unique spherical $\tau$-stable object of every phase.

  We have shown in \autoref{prop:one-orbit} that every spherical object is in the braid group orbit of the simple objects of the standard heart.
  Therefore the subgroup $G$ of $\Aut(\mathcal C)$ generated by twists in $\tau$-stable spherical objects is a subgroup of the image of the braid group in $\Aut(\mathcal C)$.

  Let $\Phi \subset \mathbb R$ be the set of phases of spherical $\tau$-stable objects.
  Since there are finitely many roots, $\Phi$ consists of integer translates of a finite set, and hence is discrete.

  Let $Y$ be the direct sum of the simple objects in the standard heart of $\calC$.
  Note that $Y$ satisfies the hypotheses of \autoref{prop:onestab}.
  By \autoref{prop:onestab}, there is a stability condition $\omega$ in the braid group orbit of $\tau$ such that $Y$ is in the $[\alpha, \alpha+1)$ heart of $\omega$.
  Let $\omega'$ be the rotation of $\omega$ by $\alpha$, so that $Y$ lies in the $[0,1)$ heart of $\omega'$.
  Note that $\omega'$ is in the same connected component as $\omega$.

  The direct summands of $Y$ generate the standard heart of $\mathcal C$ (under taking extensions).
  Therefore, the $[0,1)$ heart of $\omega'$ contains the standard heart.
  Since both hearts are hearts of a $t$-structure, they must in fact be equal.
  In other words, $\omega'$ is a standard stability condition.

  Let $\Stab_0 \mathcal C$ be the connected component of $\Stab \mathcal C$ that contains the standard stability conditions.
  We have shown that an arbitrary $\tau \in \Stab \mathcal C$ is in the braid group orbit of a stability condition $\omega$ in $\Stab_0 \mathcal C$.
  But we know that the braid group preserves the connected component $\Stab_0 \mathcal C$ and in fact every $\tau \in \Stab_0$ is, up to rotation, in the braid group orbit of a standard stability condition \cite[\S~4]{ike:14}.
  Hence, we conclude that $\Stab \mathcal C = \Stab_0 \mathcal C$ and that every stability condition is, up to rotation, in the braid group orbit of a standard one.
\end{proof}

\providecommand{\bysame}{\leavevmode\hbox to3em{\hrulefill}\thinspace}
\providecommand{\MR}{\relax\ifhmode\unskip\space\fi MR }
\providecommand{\MRhref}[2]{\href{http://www.ams.org/mathscinet-getitem?mr=#1}{#2}
}
\providecommand{\href}[2]{#2}

\bibliographystyle{amsplain}
\end{document}